\def\ZZ{{\mathbb Z}}
\def\RR{{\mathbb R}}
\def\C{{\mathbb C}}
\def\F{{\mathbb F}}
\def\Z{{\mathbb Z}}
\def\Q{{\mathbb Q}}
\def\D{{\mathbb D}}
\def\G{{\mathbb G}}
\def\U{{\mathbb U}}
\def\R{{\mathbb R}}
\def\pp{{\mathfrak p}}
\def\n{\operatorname N}
\DeclareMathOperator{\GL}{GL}
\DeclareMathOperator{\trace}{tr}
\DeclareMathOperator{\SL}{SL}
\newcommand{\ring}{\mathfrak o}
\newcommand{\p}{\mathfrak p}
\newcommand{\SU}{\mathop{SU}}
\theoremstyle{plain}
\newtheorem{thm}{Theorem}[section]
\newtheorem{cor}[thm]{Corollary}
\newtheorem{lem}[thm]{Lemma}
\newtheorem{prop}[thm]{Proposition}
\theoremstyle{definition}
\newtheorem{defn}[thm]{Definition}
\newtheorem{example}[thm]{Example}
\theoremstyle{remark}
\newtheoremstyle{Acknowledgements}
  {}
    {}
     {}
     {}
    {\bfseries}
    {}
     {.5em}
     {\thmname{#1}\thmnumber{ }\thmnote{ (#3)}}
\theoremstyle{Acknowledgements}
\newtheorem{ack}{Acknowledgements.}
\definecolor{red}{rgb}{1,0,0}
\definecolor{orange}{rgb}{1,0.5,0}
\definecolor{purple}{rgb}{.5,.2,.8}
\definecolor{blue}{rgb}{.2,.2,.8}
\definecolor{green}{rgb}{.4,.6,.4}
\definecolor{brown}{rgb}{.6,.3,.3}
\date{\today, \currenttime} 
\begin{document}
\title[Unitary groups associated to division algebras]{On unitary groups associated to division algebras of degree three}

\author{Kathrin Maurischat }
\address{\rm {\bf Kathrin Maurischat}, Mathematisches Institut,
   Heidelberg University, Im Neuenheimer Feld 205, 69120 Heidelberg, Germany }
\curraddr{}
\email{\sf maurischat@mathi.uni-heidelberg.de}


\subjclass[2010]{11R52, 22E40, 51F25}
\begin{abstract}
We show that the special unitary group associated to an involution of the second kind on a central division algebra of degree three does not contain hermitian or skew-hermitian elements.
Especially, there are no reflections.

For Albert's special cyclic presentation we show that the intersections of reasonable $S$-arithmetic subgroups with the two obvious maximal subfields consist of the trivial central elements.
\end{abstract}

\keywords{Division algebras, unitary groups, discrete cocompact subgroups}
\maketitle
\setcounter{tocdepth}{2}
\tableofcontents
\section{Introduction}
The algebraic and arithmetic structure of central semisimple algebras was developed in the 1920s and 1930s by Albert, Brauer, Deuring, Dickson, Hasse, E.~Noether, and others.
There exist  broad theories for them, like local-global principles, cohomology theories, or Brauer-Severi varieties.
In particular, Albert studied division algebras with involution for his work on Riemann manifolds.
However, division algebras stick to be impervious objects, and we may say that  so far only the case of quaternion algebras is understood comprehensively.

Here we present some results on  unitary groups which arise from involutions of the second kind on  division algebras of degree three over a number field.
Those are inner forms of special unitary groups $\SU_3(H)$ of matrices defined by a hermitian matrix $H\in\GL_3$.
Albert  \cite{albert2} showed that simple division algebras $D$ of odd prime degree over number fields carry an involution $\alpha$ of the second kind 
if they have a maximal subfield which is cyclic Galois not only over the center but over its subfield fixed by the involution, and that these algebras are cyclic.
We give an exact statement of this result for degree three in Theorem~\ref{thm-involutions}, and this special cyclic presentation will be referred to
throughout this paper.
The unitary group $\mathop U$ is given by those $g\in D$ such that $\alpha(g)g=1$, 
and the special unitary group consists of those elements which additionally are of reduced norm one, $\n_{rd}(g)=1$.

Our first main result is the following. It is surprising as it contradicts our intuition that, apart from rotations, reflections are exemplary elements of  special unitary groups.
\begin{thm}[Corollary~\ref{no-elements-of-even-order}]
  The special unitary group $\SU$ of a division algebra of degree three with involution of the second kind does not contain non-trivial elements of even order.
 Especially, there are no reflections in $\SU$.
\end{thm}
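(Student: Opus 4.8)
The plan is to reduce the statement to the single assertion that $\SU$ contains no element of order exactly two, and then to exploit that $D$ is a division algebra. First I would note that if $g\in\SU$ has finite even order $2m$ with $g\neq 1$, then $h:=g^{m}$ still lies in $\SU$, since $\SU$ is a group and hence closed under taking powers; moreover $h^{2}=1$ and $h\neq 1$. Thus it suffices to exclude such an $h$. The decisive input is the absence of zero divisors: from $h^{2}=1$ we obtain $(h-1)(h+1)=0$, and in a division algebra this forces $h=1$ or $h=-1$. As $h\neq 1$, the only candidate for a non-trivial element of order two is the central element $h=-1$.

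It then remains to rule out $-1$, which I would do with the reduced norm. For a scalar $c$ in a central simple algebra of degree three one has $\n_{rd}(c)=c^{3}$, so $\n_{rd}(-1)=(-1)^{3}=-1\neq 1$, whence $-1\notin\SU$. This contradiction shows that $\SU$ has no non-trivial element of order two, and therefore none of even order; since a reflection is in particular an involution, the ``Especially'' clause follows at once. Equivalently, one can present the argument as a genuine corollary of the main theorem: from $\alpha(h)h=1$ together with $h^{-1}=h$ one gets $\alpha(h)=h$, so any element of order two in $\SU$ would be a non-trivial hermitian element, which the theorem forbids.

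I do not anticipate a real obstacle here: the argument is short once one observes that the division-algebra structure collapses every square root of unity to $\pm 1$. The only points deserving care are the purely group-theoretic passage from even order to order two, and the comparison with the split situation, where the claim genuinely fails: in $\SU_3(H)$ there do exist such involutions, namely the elements conjugate to $\operatorname{diag}(-1,-1,1)$, which have determinant $1$ and order two. This makes clear that the phenomenon is a feature of the anisotropic division-algebra form rather than a formal triviality, and it is precisely the contrast that the introduction calls surprising.
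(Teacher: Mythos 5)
Your proof is correct, and its main line is genuinely different from --- and more elementary than --- the paper's. The paper deduces the corollary from Theorem~\ref{involution-eigenspaces}: an order-two element $h\in\G(F)$ satisfies $\alpha(h)=h^{-1}=h$, hence would be a non-trivial hermitian element, which that theorem (proved via Lemma~\ref{algebraic-lemma} and the norm condition $N_{L/E}(l_1)\in aF$) excludes. You mention this route as an aside, but your primary argument bypasses the eigenspace theorem entirely: after the same reduction from even order to order two, you factor $h^2-1=(h-1)(h+1)=0$ and use the absence of zero divisors in $D$ to conclude $h=\pm 1$, then kill $h=-1$ with $\n_{rd}(-1)=(-1)^3=-1\neq 1$. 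Both steps are sound (the reduced norm of a central scalar in a degree-three algebra is its cube, and we are in characteristic zero), so your argument shows that the even-order statement is really a feature of any division algebra of odd degree together with the reduced-norm-one condition, independent of the involution. What the paper's route buys in exchange is that Theorem~\ref{involution-eigenspaces} is the stronger structural fact --- no hermitian or skew-hermitian elements at all in $\G(F)$ --- which is needed anyway for Corollary~\ref{reflections-non-division}; the order-two case is then a one-line specialization. Your closing contrast with the split group $\SU_3(H)$, where $\operatorname{diag}(-1,-1,1)$ is exactly a product of two nonzero zero divisors, correctly identifies why the phenomenon is special to the anisotropic form.
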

This is proved by the more general observation that the intersections of the $\alpha$-eigenspaces with $\SU$ are trivial, i.e. there aren't any hermitian or skew-hermitian elements in $\SU$.
The theorem has a far-reaching consequence.
\begin{cor}[Corollary~\ref{reflections-non-division}]
 Let $H\in\GL_3(\C)$ be a hermitian matrix, and let $\SU_3(H)=\{g\in\SL_3(\C)\mid \bar g'Hg=H\}$ denote the associated special unitary group.
 Let $R\in\SU_3(H)$ be a reflection. Then $R$ is not contained in any  global division algebra
 $(D,\alpha)$ of degree three with involution $\alpha$ of the second kind  such that for some infinite place $v$,  $\SU(D_v)\cong \SU_3(H)$.
 Similarly, no hermitian or skew-hermitian element of $\SU_3(H)$ arises in this way.
\end{cor}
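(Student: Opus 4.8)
The plan is to reduce the statement to Corollary~\ref{no-elements-of-even-order} and its refinement on $\alpha$-eigenspaces, exploiting the injectivity of a global division algebra into its completions. First I would unwind the hypothesis that $R$ is \emph{contained in} $(D,\alpha)$: it provides an infinite place $v$, an isomorphism $\SU(D_v)\cong\SU_3(H)$, and a global element $r\in D$ whose image under the localization map $D\hookrightarrow D_v$ corresponds to $R$. The structural input I rely on is that $D\to D_v$ is an injective algebra homomorphism compatible with the involution $\alpha$ and its extension $\alpha_v$ to $D_v$, and with the reduced norm $\n_{rd}$.

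The second step is to see that $r$ lies in the global special unitary group $\SU(D)$. Since $R\in\SU_3(H)=\SU(D_v)$ satisfies $\alpha_v(R)R=1$ and $\n_{rd}(R)=1$, and since $r\mapsto R$, the elements $\alpha(r)r$ and $1$ have the same image in $D_v$, and likewise $\n_{rd}(r)$ and $1$ agree there; injectivity of $D\to D_v$ then yields $\alpha(r)r=1$ and $\n_{rd}(r)=1$, so $r\in\SU(D)$.

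The decisive step is a descent of order. A reflection has finite even order (indeed $R^2=\id$ for an involution), and by injectivity $r^k=1$ in $D$ exactly when $R^k=\id$ in $D_v$, so $r$ has the same order as $R$; in particular $r$ is non-trivial of even order. This contradicts Corollary~\ref{no-elements-of-even-order}, which excludes non-trivial elements of even order in $\SU(D)$. For the hermitian (respectively skew-hermitian) assertion I would argue identically: the relation $\alpha_v(R)=R$ (respectively $\alpha_v(R)=-R$) pulls back along the injection to $\alpha(r)=r$ (respectively $\alpha(r)=-r$), so $r$ is a non-trivial element of an $\alpha$-eigenspace lying in $\SU(D)$, which is forbidden by the stronger statement that these eigenspaces meet $\SU$ trivially.

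I expect the only genuine obstacle to be conceptual bookkeeping rather than computation, namely making precise the dictionary between the matrix model $\SU_3(H)$ at the place $v$ and the intrinsic group $\SU(D)$. In particular one must check that being a reflection and being hermitian or skew-hermitian are intrinsic properties of the abstract element $r$, invariant under the isomorphism $\SU(D_v)\cong\SU_3(H)$, rather than artifacts of the chosen $H$ and the matrix realization. Once this compatibility of involutions and eigenspaces under the isomorphism is recorded, the descent via injectivity of $D\hookrightarrow D_v$ is routine and the contradictions are immediate.
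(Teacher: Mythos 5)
Your proposal is correct and follows essentially the same route as the paper, which states Corollary~\ref{reflections-non-division} as an immediate consequence of Theorem~\ref{involution-eigenspaces} (and Corollary~\ref{no-elements-of-even-order}) without writing out the deduction: a reflection, being an involution in $\SU_3(H)$, is $\alpha_v$-hermitian, so any global preimage $r\in D$ would be a non-trivial $\alpha$-eigenvector in $\G(F)$, which is excluded. Your explicit bookkeeping of the injectivity of $D\hookrightarrow D_v$ and the compatibility of $\alpha$ with $M\mapsto H^{-1}\bar M'H$ (as set up in Section~\ref{section-definitness}) is exactly the content the paper leaves implicit.
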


The second section of our results is concerned with  $S$-arithmetic subgroups of the special unitary group. For this, the structure of the algebra $D$ and its involution $\alpha$ 
must allow an integral model.
Arithmetic subgroups provide  prototypes of discrete subgroups of the local groups $\SU(F_\pp)$, which under mild assumptions,
such as the compactness of $\SU(F_v)$ at some infinite place $v$ (\cite{borel}, \cite{borel-harder}),  are seen to be cocompact.
While they arise at many points in modern mathematics, 
there is no explicitly computed example of a discrete cocompact global subgroup for the special unitary groups in question so far.
We use the special cyclic presentation of $D$ mentioned above,
\begin{equation*}
 D\:=\: L\oplus Lz\oplus Lz^2\:,
 \end{equation*}
where $L$ is $C_6$-Galois over the totally real field $F$, the center of $D$ being an imaginary quadratic extension $E$ of $F$,
and assume the cyclic presentation as well as the involution is defined over $\ring_F$. Then the special unitary group $\SU$ gives rise to a group scheme $\G$ defined over the integers $\ring_F$.
A criterion for this is given by Proposition~\ref{maximal-orders}.
Then the main result of Section~\ref{section-integers} may be formulated as follows.
\begin{thm}[Proposition~\ref{prop-S-monomials}, Theorem~\ref{thm-no-o_E(S)-points}]
 Let $S$ be a set of prime ideals $\pp$ not dividing $2$, which are inert in $E$ but split in $L$, and such that $F_\pp$ does not contain the third roots of unity.
 Then the intersections of the $S$-arithmetic subgroup $\G(\ring_F(S))$ with the maximal subfields $L$ and $E(z)$ both are given by the third roots of unity contained in $E$.
\end{thm}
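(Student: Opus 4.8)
The plan is to handle the two maximal subfields in parallel: reduce membership in $\SU$ to a pair of norm conditions, use the splitting behaviour over $S$ to upgrade unitarity to genuine integrality, and finally invoke a Frobenius argument to cut the surviving torsion down to $\mu_3(E)$.

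First I would pin down the involution on each subfield. Since $\alpha$ is of the second kind, $\alpha|_L$ is a field automorphism of order two which is nontrivial on $E$; as $\Gal(L/F)$ is cyclic of order six, the only such automorphism is its unique involution, namely complex conjugation, whose fixed field $M$ is the totally real cubic subfield. Hence for $g\in L$ the unitarity $\alpha(g)g=1$ reads $N_{L/M}(g)=1$, and together with $\n_{rd}(g)=N_{L/E}(g)=1$ this realises $\SU\cap L$ as $\ker N_{L/M}\cap\ker N_{L/E}$. For $E(z)$ a short computation from $z\ell=\sigma(\ell)z$ and $\alpha^2=\id$ gives $\alpha(z)=uz^{-1}$ with $u\in M^{\times}$; in the special presentation of Theorem~\ref{thm-involutions} one has $u\in F^{\times}$, so $\alpha$ stabilises $E(z)$ and the analogous two norm conditions describe $\SU\cap E(z)$.

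The heart of the matter is that the hypotheses on $S$ convert unitarity into integrality. If $\mathfrak p\in S$ is inert in $E$ and split in $L$, then the primes $\mathfrak P$ of $L$ over $\mathfrak p$ are unramified of residue degree two and their Frobenius is precisely complex conjugation $\alpha|_L$; in particular $\alpha|_L$ fixes each $\mathfrak P$. Evaluating $N_{L/M}(g)=g\cdot\alpha|_L(g)=1$ at $\mathfrak P$ then yields $2\,v_{\mathfrak P}(g)=0$, so $g$ is a unit at every place of $S$; being an $S$-unit to begin with, $g$ is a global unit. Since $L$ is a CM field with $\alpha|_L$ its complex conjugation, $N_{L/M}(g)=1$ also forces $|g|_v=1$ at every archimedean place, so by Kronecker's theorem $g\in\mu(L)$, and I am reduced to identifying the roots of unity with $N_{L/E}=1$. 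To finish I would use Frobenius once more: for $\zeta$ of order $n$ the relation $\alpha|_L(\zeta)=\zeta^{-1}$ together with $\alpha|_L=\mathrm{Frob}_{\mathfrak p}$ gives $\zeta^{-1}=\zeta^{\mathrm{N}(\mathfrak p)}$, whence $n\mid \mathrm{N}(\mathfrak p)+1$ for every $\mathfrak p\in S$; the assumption that $F_{\mathfrak p}$ contains no cube roots of unity fixes $\mathrm{N}(\mathfrak p)\equiv 2\pmod 3$, and combined with the $\sigma$-action on $\mu(L)$ this should leave only $n\mid 3$ with $\zeta\in E$, i.e. $\SU\cap L\cap\G(\ring_F(S))=\mu_3(E)$.

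The genuinely delicate point—and what I expect to be the main obstacle—is the exclusion of the ``automatic'' torsion: any $\mu_\ell\subset L$ with $\ell\equiv 1\pmod 3$ on which $\sigma$ acts with order three satisfies $N_{L/E}(\zeta)=\zeta^{1+s+s^{2}}=1$ for free and is of odd order, so it is not ruled out by the earlier results on even-order elements and would otherwise enlarge the intersection. Showing that the combined splitting and cube-root hypotheses on $S$ forbid such $\zeta$ is the crux of the $L$-case. For $E(z)$ I would run the same three steps, using Proposition~\ref{maximal-orders} to fix the integral model and the ``$S$-monomial'' decomposition $h=x_0+x_1z+x_2z^{2}$: at $\mathfrak p\in S$ the algebra $D$ splits and $z$ becomes a unit, while $\mu_3\subset E_{\mathfrak P}$ yet $\mu_3\not\subset F_{\mathfrak p}$, so complex conjugation permutes the three local embeddings of $E(z)$ nontrivially. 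I expect this asymmetry—rather than a CM structure, which $E(z)$ need not possess—to force $x_1=x_2=0$ and hence $h\in\mu_3(E)$; the absence of a Galois/CM structure on $E(z)$ is the principal additional difficulty in the second case.
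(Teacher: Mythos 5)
Your treatment of the intersection with $L$ is essentially the paper's own argument for Proposition~\ref{prop-S-monomials}: the key step there is exactly your observation that for $\pp$ inert in $E$ and split in $L$ the decomposition group of each prime $\mathfrak P\mid\pp$ is $\langle\tau\rangle$, so $\n_{L/M}(l)=l\bar l=1$ gives $2v_{\mathfrak P}(l)=0$ and $l$ is a global unit. Where you then invoke Kronecker and the Frobenius congruence $n\mid \mathrm N(\pp)+1$, the paper instead writes $\ring_L^\times=\ring_E^\times\times\langle e_1,\dots,e_r\rangle$ with $e_i\in\ring_M^\times$, kills the free part via $1=\xi\bar\xi\,e_1^{2s_1}\cdots e_r^{2s_r}$, and finishes with $\n_{L/E}(\xi)=\xi^3=1$; note that the hypothesis $\mu_3\not\subset F_\pp$ is not used at all for this half. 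The ``exotic torsion'' you single out as the crux (odd $n$ with $1+s+s^2\equiv0$) is dispatched in the paper only through the asserted identification of the torsion of $\ring_L^\times$ with $\ring_E^\times$, so your worry is legitimate, but it does not separate your route from the paper's.

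The genuine gap is the $E(z)$ half, which is the technical heart of the paper and for which you offer no argument: ``I expect this asymmetry to force $x_1=x_2=0$'' is a hope, and it is the wrong hope. In the paper's own Example~\ref{example-S-arithmetic} one has $\alpha(z)z=\bar a z^3=1$, so $z$ itself is a unitary global unit of $E(z)$ with $x_1\neq0$; no local or valuation-theoretic asymmetry can force the off-diagonal coordinates of a unitary $S$-unit to vanish, and $z$ is excluded from $\G$ only by the reduced norm $\n_{rd}(z)=a\neq1$. More fundamentally, an element of $\G(\ring_F(S))\cap E(z)$ has no a priori reason to be monomial, and your three steps do not transfer: the Kronecker step needs $|h|_w=1$ at all archimedean places, i.e.\ that $E(z)^\alpha$ is totally real (a definiteness condition on $b$ in the spirit of Proposition~\ref{prop-definite} that is nowhere established in your sketch), and even then one lands only in $\mu(E(z))$, which still has to be classified against both norm conditions. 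The paper's proof of Theorem~\ref{thm-no-o_E(S)-points} is of a completely different nature: one clears denominators minimally at a prime $\pp\in S$, reduces the two unitary equations (\ref{unit1b}), (\ref{unit2b}) to the homogeneous system (\ref{Unit1modp}), (\ref{Unit2modp}) over the residue extension $\F_{p^{2n}}/\F_{p^n}$ (or its ramified analogue $\F_{p^n}[\pi]/(\pi^2)$), and shows by the explicit computations of Lemmas~\ref{homogenous_solutions_modp} and \ref{homogenous_solutions_mod3} that this system has only the trivial solution precisely because $\F_{p^n}$ contains no primitive sixth root of unity --- this is where the hypothesis on $F_\pp$ actually does its work. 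The resulting contradiction with the minimality of the denominator exponent shows the element is integral, and an archimedean positivity argument then reduces integral solutions to monomials and hence to $\mu_3(E)$. None of this machinery, nor any substitute for it, appears in your proposal, so the $E(z)$ statement remains unproved.
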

Prime ideals $\pp$ satisfying the assumption of the theorem lead to proper special unitary groups $\SU_3(E_\pp)$, i.e. non-split and non-isomorphic to $\SL_3(E_\pp)$.
The assumption on $F_\pp$ not containing the sixth roots of unity may seem artificial at a first glance. But if $E$ is chosen to be the Kummer extension $F(\zeta_3)$,
 this is satisfied for all inert prime places.
The theorem tells us that non-trivial elements of the promising $S$-arithmetic subgroups $\G(\ring_F(S))$ belong to non-obvious splitting fields of $D$.
The results of Section~\ref{section-integers} are more detailed, for example they give a criterion for the denominators of the $\mathop{Gal}(L/E)$-fixed points of $\G(F)$ 
(Corollary~\ref{denominators}).
\bigskip

The paper is organized as follows.
In Section~\ref{section-involutions} we recall Albert's theorem and the resulting special cyclic presentation. 
In Section~\ref{section-unitary-groups} we define the unitary groups and discuss the problem of defining them as integral group schemes.
A criterion for definiteness of the hermitian forms involved is given in Section~\ref{section-definitness}.
The results on the order of elements are found in Section~\ref{section-fixed-points}, where we provide a discussion of fixed points of the unitary groups for several actions
on their coefficients. 
Finally, in Section~\ref{section-integers} we give the properties of $S$-arithmetic subgroups. 
Here we restrict to the case that the center $E$ is  imaginary quadratic over the totally real ground field $F$ fixed by the involution.
This means that at the infinite places we indeed get unitary groups (instead of copies of $\GL_3$), which in view of the applications we have in mind is sensible.
The results can be obtained in greater generality as results on the coefficients with respect to the special cyclic presentation with moderate requirements on the involution. 
Also, most of them are true for the unitary group as well.
So we present and prove them in general and give the results on $S$-arithmetic subgroups of special unitary groups afterwards as Example~\ref{example-S-arithmetic}.
The used methods are of astonishing simple algebraic and arithmetic nature.
\bigskip

\begin{ack}
The author  thanks Cristina Ballantine and Brooke Feigon for their encouragement to publish the results in hand separately from the joint work.
Also, thanks go to Andreas Maurischat for the help  from his bag of algebraic tricks.
\end{ack}

\section{Preliminaries}\label{preliminaries}
\subsection{Involutions of the second kind}\label{section-involutions}
Let  $E/F$ be an extension of number fields of degree two, and denote by $\tau$ the non-trivial Galois automorphism.
We often abbreviate $\tau$ by  conjugation, $\tau(x)=\bar x$.
Let $D$ be a central simple division algebra over $E$ of degree three.
An involution of the second kind on $D$ is an anti-automorphism $\alpha$ such that when restricted to the center $E$ it equals $\tau$, $\alpha\mid_E=\tau$.
Throughout this paper we will use the following cyclic presentation of $D$. In this it is crucial that there exists a field extension $L/E$ contained in $D$ such that $L/F$ is a 
Galois extension with cyclic Galois group $\mathop{Gal}(L/F)\cong C_6$. 
This is a result due to Albert~\cite{albert2}, especially Theorem~22, and valid in much more generality.
We denote by $\rho$ a generator of the Galois group $\mathop{Gal}(L/E)$, and extend $\tau$ to $L$. Then
$\mathop{Gal}(L/F)=<\rho\tau>$. 
\begin{thm}\label{thm-involutions}[See~\cite{albert2}]
Let $E/F$ be a quadratic extension of number fields and denote by $\tau$ its non-trivial Galois automorphism.
 Let $D$ be a central simple division algebra over $E$ of degree three. 
 Then $D$ carries an involution of the second kind extending $\tau$ if and only if the following two conditions are satisfied.
 \begin{itemize}
  \item [(i)]
  There exists a maximal subfield $L$ in $D$ such that $L/F$ is $C_6$-Galois. 
  Consequently, a cyclic realization of $D$ is given by
  $D=L\oplus Lz\oplus Lz^2$, subject to the relations $z^3=a\in E^\times$ and $zl=\rho(l)z$ for all $l\in L$, where $\rho$ denotes a non-trivial element of $\mathop{Gal}(L/E)$.
  \item [(ii)]
  There exists a solution $b\in M=L^\tau$  of the norm equation
  \begin{equation*}
 \n_{E/F}(a)\:=\:\n_{M/F}(b)\:.
\end{equation*}
 \end{itemize}
In this case, an involution $\alpha$ of the second kind is given by the obstructions
\begin{equation*}
 \alpha\mid_L\:=\:\tau\quad \textrm{ and }\quad \alpha(z)\:=\:\frac{b}{a}z^2\:.
\end{equation*}
Any other involution $\beta$ of the second kind is conjugate to $\alpha$ by some element $c\in M^\times$,
\begin{equation*}
 \beta(d)\:=\: c^{-1}\alpha(d)c\:,
\end{equation*}
for all $d\in D$, and in particular $\beta(z)=c^{-1}\rho^2(c)\alpha(z)$.
Moreover, a cyclic algebra as described in (i) is a division algebra if and only if $a$ is not a norm of $L$, $a\not\in \n_{L/E}$. 
\end{thm}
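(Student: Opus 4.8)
The plan is to prove the equivalence by constructing the involution explicitly for the ``if'' direction and reducing to Albert's structure theorem for the ``only if'' direction, then to read off the classification and the division criterion from standard facts. First I would record the Galois-theoretic input that makes the computations run. Since $\mathop{Gal}(L/F)\cong C_6$ is abelian, the extended $\tau$ commutes with the generator $\rho$ of $\mathop{Gal}(L/E)$, so that $\rho^2\tau\rho=\tau$ and $\rho^3=\mathrm{id}$; moreover $\rho$ stabilises $M=L^\tau$ and $\rho|_M$ generates $\mathop{Gal}(M/F)\cong C_3$. The key consequence is that for every $b\in M$ the two norms coincide, $\n_{L/E}(b)=b\,\rho(b)\,\rho^2(b)=\n_{M/F}(b)\in F$, which is what allows one to pass between the norms in condition (ii).

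For the sufficiency of (i) and (ii), I would define $\alpha$ by $\alpha|_L=\tau$ and $\alpha(z)=\tfrac{b}{a}z^2$ and verify that it extends to a well-defined anti-automorphism with $\alpha^2=\mathrm{id}$. Writing $c=b/a\in L$ and using $z^2l=\rho^2(l)z^2$ together with $z^3=a$, the compatibility of $\alpha$ with $zl=\rho(l)z$ is automatic from $\rho^2\tau\rho=\tau$, and the only substantive checks are the two identities
\begin{equation*}
\alpha(z)^3=\n_{L/E}(c)\,a^2\stackrel{!}{=}\tau(a),\qquad
\alpha^2(z)=\frac{\n_{M/F}(b)}{\n_{E/F}(a)}\,z\stackrel{!}{=}z,
\end{equation*}
both of which reduce, after inserting $c=b/a$ and the identity $\n_{L/E}(b)=\n_{M/F}(b)$, to the norm equation (ii). Since $\alpha|_E=\tau\neq\mathrm{id}$, this $\alpha$ is an involution of the second kind, which simultaneously establishes one implication and the stated formula for $\alpha$.

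For the necessity, the existence of a $C_6$-Galois maximal subfield $L$, and hence the cyclic presentation $D=(L/E,\rho,a)$ (the latter by Wedderburn's theorem that degree-three central simple algebras are cyclic), is precisely Albert's theorem, which I would cite from \cite{albert2}; this is the part I expect to be the main obstacle, being the genuinely deep input. Granting (i), I would obtain (ii) from the corestriction criterion that $D$ carries an involution of the second kind extending $\tau$ if and only if $\mathrm{cor}_{E/F}[D]=0$ in $\mathrm{Br}(F)$, whose evaluation on the cyclic class $(L/E,\rho,a)$ with $L/F$ Galois is exactly the condition $\n_{E/F}(a)\in\n_{M/F}(M^\times)$. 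As a consistency check I would also note the hands-on route: any involution with $\alpha|_L=\tau$ forces $\alpha(z)z^{-2}$ to centralise the maximal subfield $L$, hence $\alpha(z)=cz^2$ with $c\in L^\times$, and the two constraints above then become $\n_{L/E}(c)=\tau(a)a^{-2}$ and $c\,\rho^2(c)\,\rho\tau(c)=a^{-1}$, which upon setting $b=ca$ both collapse to $\n_{M/F}(b)=\n_{E/F}(a)$.

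Finally, for the classification of involutions I would restrict to those $\beta$ of the second kind that agree with $\alpha$ on $L$ (the natural class attached to the given presentation, for which the formula $\beta(z)=c^{-1}\rho^2(c)\alpha(z)$ makes sense). Then $\beta\alpha^{-1}$ is an $E$-algebra automorphism of $D$ that is trivial on $L$, so by Skolem--Noether it equals $\operatorname{Int}(u)$ with $u$ centralising $L$; since $L$ is self-centralising one gets $u\in L^\times$, and the involution condition $\beta^2=\mathrm{id}$ forces $\tau(u)/u\in E^\times$ of norm one, which after a central rescaling (not changing $\operatorname{Int}(u)$) yields $u\in M^\times$. Setting $c=u^{-1}$ gives $\beta(d)=c^{-1}\alpha(d)c$, and a one-line computation with $z^2c=\rho^2(c)z^2$ produces $\beta(z)=c^{-1}\rho^2(c)\alpha(z)$. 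The division-algebra criterion is the standard fact that a cyclic algebra of prime degree is either split or a division algebra and splits exactly when $a\in\n_{L/E}(L^\times)$, which I would simply invoke.
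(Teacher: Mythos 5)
The paper offers no proof of Theorem~\ref{thm-involutions} to compare against: it is quoted from Albert~\cite{albert2} (Theorem~22 there), with the citation standing in for the argument. Your reconstruction is correct and apportions the work the right way: the genuinely deep step --- that an involution of the second kind forces a maximal subfield $L$ with $L/F$ cyclic of degree six, and hence the cyclic presentation --- is exactly what you defer to Albert, and the remaining verifications are standard and correctly executed (the two identities $\alpha(z)^3=\tau(a)$ and $\alpha^2(z)=z$ do reduce to the norm equation via $\n_{L/E}(b)=\n_{M/F}(b)$ for $b\in M$; the corestriction criterion together with the projection formula $\mathrm{cor}_{E/F}(L/E,\rho,a)=(M/F,\rho,\n_{E/F}(a))$ gives the necessity of (ii); Skolem--Noether plus a Hilbert~90 rescaling of $u$ into $M^\times$ gives the classification; and Wedderburn's splitting criterion for cyclic algebras of prime degree gives the last assertion). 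Two small points deserve more care. First, in your ``hands-on'' consistency check the element $b=ca$ need not lie in $M$; to produce a solution of the norm equation in $M$ you need the observation that an element of $F^\times\cap\n_{L/E}(L^\times)$ is automatically an $\n_{M/F}$-norm --- this is precisely the computation the paper later isolates as Lemma~\ref{algebraic-lemma} --- or else a further Hilbert~90 adjustment of $c$. Second, your restriction of the classification to involutions $\beta$ with $\beta|_L=\tau$ is the right reading of the statement, since conjugation by $c\in M^\times\subset L^\times$ cannot alter the restriction to the commutative subfield $L$; a completely general involution of the second kind must first be brought into this position by moving $L$ into $\beta$-stable position, which is again part of what Albert supplies.
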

For the field extension $L/F$ we have  the following picture.

\begin{xy}
 \xymatrix{
 &L&\\
 &&L^\tau=M \ar@{-}[lu]^2_{<\tau>}\\
 L^\rho=E \ar@{-}[ruu]^{<\rho>}_3&&\\
 &F \ar@{-}[ruu]^3_{<\rho>} \ar@{-}[lu]^{<\tau>}_2& 
 }
\end{xy}
Notice that the property $a\notin\n_{L/E}$ together with $a\bar a=\n_{M/F}(b)$ force $a\in E\setminus F$, as otherwise $a^2=\n_{M/F}(b)\in\n_{L/E}$. 
But then $a=\n_{M/F}(b^2a^{-1})$ belongs to $\n_{L/E}$, too.
The structure constant $a$ of $D$ is unique up to a factor $\n_{L/E}(l)$ for some $l\in L$. So it is always possible to choose  $a\in\ring_E$, the ring of integers of $E$.
Throughout this paper, we refer to a division algebra $D$ with constants $a\in\ring_E\setminus \ring_F$ and $b\in M$ satisfying the special cyclic presentation of Theorem~\ref{thm-involutions}.

We have the following embedding of the cyclic algebra $D=L\oplus Lz\oplus Lz^2$ to the matrix ring $M_3(L)$ defined by
\begin{equation*}
 L\ni l\:\mapsto\: \begin{pmatrix}l&&\\&\rho(l)&\\&&\rho^2(l) \end{pmatrix}\:,\quad
  z\:\mapsto\:\begin{pmatrix}&1&\\&&1\\a&&\end{pmatrix}\:.
\end{equation*}

We will often identify $g=l_0+l_1z+l_2z^2$, $l_j\in L$, with its image
\begin{equation}\label{embedding}
 g\:=\:g(l_0,l_1,l_2)\:=\:\begin{pmatrix}  l_0&l_1&l_2\\a\rho(l_2)&\rho(l_0)&\rho(l_1)\\a\rho^2(l_1)&a\rho^2(l_2)&\rho^2(l_0)\end{pmatrix}\:.
\end{equation}
Under this embedding, the involution $\alpha$ is realized as
\begin{equation}\label{matrix-involution}
 \alpha(g)\:=\:
 \begin{pmatrix}
  \bar l_0&\frac{\rho(\rho(b)\rho^2(b)\bar l_2)}{a}&\frac{\rho^2(\rho(b)\bar l_1)}{a}\\
  \rho(b)\bar l_1&\rho(\bar l_0)&\frac{\rho^2(\rho(b)\rho^2(b)\bar l_2)}{a}\\
  \rho(b)\rho^2(b)\bar l_2&\rho(\rho(b)\bar l_1)& \rho^2(\bar l_0)
 \end{pmatrix}.
\end{equation}

\subsection{Unitary groups}\label{section-unitary-groups}
Let $D$ be a cyclic algebra satisfying Theorem~\ref{thm-involutions} with structure constant $a\in\ring_E$ and  with involution $\alpha$ attached to a fixed $b\in M$.
The involution $\alpha$ gives rise to a non-degenerate hermitian form $h$ on  $D$,
\begin{eqnarray*}
 h:D\times D &\longrightarrow &D\\
 (x,y)&\mapsto& \alpha(x)y\:.
\end{eqnarray*}
That is, for all $\lambda,\mu,x,y\in D$ we have
\begin{equation*}
 h( x\lambda, y\mu)\:=\: \alpha(\lambda)\alpha(x)y\mu=\alpha(\lambda)h(x,y)\mu
\end{equation*}
as well as 
\begin{equation*}
 \alpha(h(x,y))\:=\: \alpha(y)x\:=\:h(y,x)\:.
\end{equation*}
The unitary group of this hermitian form is 
\begin{equation*}
\mathop U\:=\:\{g\in D^\times\mid h(gx,gy)=h(x,y)\textrm{ for all } x,y\in D\}\:=\: \{g\in D^\times\mid \alpha(g)g=1\}\:,
\end{equation*}
and the special unitary group
\begin{equation*}
 \mathop{SU}\:=\:\{g\in\mathop U\mid \n_{rd}(g)=1\}
\end{equation*}
is the subgroup of those $g$ with reduced norm one, which using the embedding (\ref{embedding}) is given by the determinant.
Let $\ring_D$ be the maximal order of $D$ given by its integral elements.
We define a  unitary group scheme $\U$ over $\ring_F$
forcing its $F$-valued points to be $\U(F)=\mathop U$,
\begin{equation*}
\U(R)\:=\:\{g\in \ring_D\otimes_{\ring_F} R\mid \alpha(g)g=1\}
\end{equation*}
for all extension rings $R$  of $\ring_F$.
Similarly, we have the special unitary group scheme $\G$ such that $\G(F)=\mathop{SU}$,
\begin{equation*}
 \G(R)\:=\:\{g\in \ring_D\otimes_{\ring_F} R\mid \alpha(g)g=1,\: \n_{rd}(g)=1\}\:.
\end{equation*}

We can also define  unitary schemes over $F$ thereby giving in the notion of $\ring_F$-valued points.
For example, it is very appealing to use the embedding of $D$ into $M_3(L)$ as $L$-algebra given by (\ref{embedding}).
But this is defined  over $\ring_L$ if and only if $a$ is a unit in $\ring_E$.
There is an extension  of the involution $\alpha$ on $M_3(L)$,
\begin{equation}\label{matrix-invol}
 \alpha(\begin{pmatrix}
         m_{00}&m_{01}&m_{02}\\
         m_{10}&m_{11}&m_{12}\\
         m_{20}&m_{21}&m_{22}
        \end{pmatrix})
        \:=\:
\begin{pmatrix}
 \bar m_{00}&\rho(b^{-1})\bar m_{10}&\rho(b^{-1})\rho^2(b^{-1})\bar m_{20}\\
  \rho(b)\bar m_{01}&\bar m_{11}&\rho^2(b^{-1})\bar m_{21}\\
  \rho(b)\rho^2(b)\bar m_{02}&\rho^2(b)\bar m_{12}& \bar m_{22}
\end{pmatrix}\:,
\end{equation}
and a group scheme $\D$ defined over $F$ as a subscheme of $M_3(L)$ such that $\D(F)\subset M_3(L)$ coincides with the image of $D$, 
and  unitary group schemes with $F$-valued points $\mathop{U}$, $\mathop{SU}$ can be defined directly as subgroups of $\GL_3(L)$.
We will refer to these schemes as the cyclic presentations associated to Theorem~\ref{thm-involutions}.
As long as we are concerned with $K$-valued points, $K$ an extension of $F$, the two notions  coincide, and we will make frequent use of the cyclic presentation. 
We can at least talk about the set $\Lambda$ of $\ring_L$-points of $\D(F)$ meaning those with matrix entries in $\ring_L$, $\Lambda=\ring_L\oplus \ring_Lz\oplus\ring_Lz^2$.
But notice  that in case $a\notin \ring_E^\times$ or $b\notin \ring_M^\times$ the involution $\alpha$ will neither act on $M_3(\ring_L)$ nor on the $\ring_L$-points $\Lambda$ of $\D(F)$.

In contrary, $\alpha$ acts on $\D(\ring_F)=\ring_D$, as for an integral element $d\in\ring_D$, the minimal polynomial $f$ belongs to $\ring_E[X]$, 
and the polynomial with conjugate coefficients $\bar f\in\ring_E[X]$ is a minimal polynomial for $\alpha(d)$.
But  if and only if  $\ring_D$ coincides with $\Lambda$, then
$\D$ is already defined over $\ring_F$, and thus is a matrix realization of the group scheme given by the maximal order $\ring_D$ of integers in $D$,
and the notions of integral points of $\U$ and $\G$ coincide with the notions of the $\ring_L$-valued points of the corresponding cyclic presentation group.

\begin{prop}\label{maximal-orders}
 The order $\Lambda=\ring_L+\ring_Lz+\ring_Lz^2$ of $D$ equals the maximal order $\ring_D$ of integral elements if and only if $a$ is a unit in $\ring_E$.
\end{prop}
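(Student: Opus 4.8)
The plan is to realize $\Lambda$ as an $\ring_F$-order contained in $\ring_D$ and then compare the two by a single discriminant computation, thereby reducing the equality $\Lambda=\ring_D$ to a purely local question at the primes dividing $a$. First I would check that $\Lambda$ is an order at all: since $\rho$ preserves $\ring_L$ we have $z\ring_L=\ring_L z$, and $z^3=a\in\ring_E\subseteq\ring_L$, so $\Lambda$ is closed under multiplication; being a finitely generated $\ring_F$-module containing $1$, every one of its elements is integral, whence $\Lambda\subseteq\ring_D$ unconditionally. Because maximality may be tested after completion, it then suffices to decide for each prime $\mathfrak q$ of $\ring_E$ whether $\Lambda_{\mathfrak q}=(\ring_D)_{\mathfrak q}$.

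The computational heart is the discriminant of $\Lambda$ for the reduced trace form. Writing $g=l_0+l_1z+l_2z^2$ and using the embedding (\ref{embedding}), the reduced trace of $c z^m$ (with $c\in L$) vanishes unless $3\mid m$, being $\trace_{L/E}(c)$ for $m=0$ and $a\,\trace_{L/E}(c)$ for $m=3$. Hence, in the $\ring_E$-basis $\{\omega_k z^i\}$ with $\{\omega_k\}$ an integral basis of $\ring_L/\ring_E$, the Gram matrix of $\trace_{rd}(xy)$ is block anti-diagonal, with blocks $(\trace_{L/E}(\omega_k\omega_l))$, $a\,(\trace_{L/E}(\omega_k\rho(\omega_l)))$ and $a\,(\trace_{L/E}(\omega_k\rho^2(\omega_l)))$. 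Since $\rho$ carries the integral basis to another one its matrix has unit determinant, and a short computation gives $\operatorname{disc}(\Lambda/\ring_E)=(a)^6\,d_{L/E}^{\,3}$ as ideals, where $d_{L/E}=\operatorname{disc}(\ring_L/\ring_E)$. Comparing with $\operatorname{disc}(\Lambda/\ring_E)=[\ring_D:\Lambda]^2\operatorname{disc}(\ring_D/\ring_E)$ turns the desired equivalence into the claim that the surplus factor $(a)^6$ measures exactly the gap between $\Lambda$ and a maximal order.

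For the direction ``$\Leftarrow$'' I would argue that if $a\in\ring_E^\times$ then $z$ is a unit of $\Lambda$, with $z^{-1}=a^{-1}z^2\in\Lambda$, so that $\Lambda$ is the crossed-product order $\ring_L\langle z,z^{-1};\rho\rangle$ attached to a trivial (unit) factor; such an order is maximal, being Azumaya over $\ring_{E_{\mathfrak q}}$ at the primes unramified in $L$ and, at the finitely many primes ramifying in $L/E$, having reduced discriminant $d_{L/E}$, which is minimal. For ``$\Rightarrow$'' I would localize at a prime $\mathfrak q\mid a$ that splits in $L$: there $D_{\mathfrak q}\cong M_3(E_{\mathfrak q})$ is split, and by (\ref{embedding}) every element of $\Lambda_{\mathfrak q}$ has its three subdiagonal entries divisible by $a$, so $\Lambda_{\mathfrak q}$ is a proper suborder of the maximal order $M_3(\ring_{E_{\mathfrak q}})$ and cannot be maximal, giving $\Lambda\neq\ring_D$.

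The main obstacle is the precise behaviour at the primes dividing $a$. Controlling $\operatorname{disc}(\ring_D)$ amounts to knowing the ramification of $D$, which is itself governed by $a$ through the local invariants of the cyclic algebra $(L/E,\rho,a)$; the delicate point is thus to show that the factor $(a)^6$ in $\operatorname{disc}(\Lambda)$ genuinely forces a proper inclusion rather than being absorbed into the ramification of $\ring_D$ at primes inert in $L$. This is exactly where the standing hypotheses $a\in\ring_E\setminus\ring_F$ and $a\notin\n_{L/E}$ must be brought to bear, so as to guarantee that a non-unit $a$ always admits a prime divisor of the splitting type used in the second direction.
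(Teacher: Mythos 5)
Your discriminant computation and your treatment of the direction ``$a\in\ring_E^\times\Rightarrow\Lambda$ maximal'' run essentially parallel to the paper, which likewise computes $\mathop{disc}(\Lambda)=\bigl(-a^2\mathop{disc}(\ring_L)\bigr)^3$ and concludes by minimality of the discriminant; where you appeal to the crossed-product/Azumaya picture, the paper instead notes that any maximal order containing $\ring_L$ has its different divisible by that of $\ring_L$, so that $\mathop{disc}(\ring_L)^3$ is a genuine lower bound — a point your sketch still owes at the primes ramified in $L/E$. The genuine gap is in the converse direction. You localize at a prime $\pp\mid a$ \emph{that splits in $L$}, and you yourself flag the existence of such a prime as ``the delicate point''; but the standing hypotheses $a\in\ring_E\setminus\ring_F$ and $a\notin\n_{L/E}$ do not supply it, and nothing prevents $a$ from being divisible only by primes inert or ramified in $L/E$. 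Worse, at such primes the discriminant count can genuinely fail to detect non-maximality: if $\pp$ is inert in $L/E$ and $v_\pp(a)=1$, then $D_\pp$ is a local division algebra whose unique maximal order has discriminant $\pp^{6}$, which equals $\mathop{disc}(\Lambda_\pp)=\pp^{6v_\pp(a)}$ exactly — the factor $(a)^6$ is precisely ``absorbed into the ramification'' in the scenario you were worried about, and $\Lambda_\pp$ \emph{is} maximal there. So no single-prime discriminant comparison of the type you propose can close this direction.

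The paper closes it with an ingredient absent from your proposal: the involution datum $b$. From $a\bar a=\n_{L/E}(b)$ one computes that $b^{-1}z^2$ has reduced characteristic polynomial $X^3-\tfrac{a}{\bar a}$; choosing a prime $\pp\mid a$ (passing to $\bar\pp$ if necessary so that $v_\pp(a/\bar a)\ge 0$) this element is integral at $\pp$, while $b^{-1}\notin\ring_{L_\pp}$ because $v_\pp(\n_{L/E}(b))=v_\pp(a\bar a)>0$. Hence $\ring_{D_\pp}\supsetneq\Lambda_\pp$ uniformly at a prime dividing $a$, with no case distinction on the splitting type of $\pp$ in $L$ and no need to know where $D$ ramifies. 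To salvage your route you would have to supplement the split case by an analysis at the inert and ramified primes — for instance via the conjugate prime $\bar\pp$, where the norm relation forces $v_{\bar\pp}(a)\equiv -v_\pp(a)\bmod 3$ when $\pp$ is inert in $L/E$ — at which point you would in effect be reconstructing the paper's explicit element $b^{-1}z^2$.
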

\begin{proof}[Proof of Proposition~\ref{maximal-orders}]
First notice that  $\Lambda$ is contained in $\ring_D$.
  By~\cite[11.6]{reiner}, the order $\Lambda$ is maximal if and only if for all prime ideals $\pp$ of $\ring_E$ the localization $\Lambda_\pp$ is a maximal order in $D_\pp$.
    Assume $a\in\ring_E\setminus \ring_E^\times$ and let $\pp$ be a prime ideal of $\ring_E$ which contains $a$.
  As $b\in M$ is chosen such that $a\bar a=\n_{L/E}(b)$, we find that the element $b^{-1}z^2\in D$ has minimal polynomial $X^3-\frac{a}{\bar a}$.
  As $v_\pp(\frac{a}{\bar a})=0$, we see that $b^{-1}z^2$ is an integer of $D_\pp$. But $b^{-1}\notin \ring_{L_\p}$. 
  So the maximal order  $\ring_{D_\pp}$ is strictly larger than $\Lambda_\pp$.
  
On the other hand, the discriminant of $\Lambda$ is given by  its generator
  \begin{equation*}
   \mathop{disc}(\Lambda)\:=\:\det(\trace\nolimits_{\mathop{rd}}(b_{jk}b_{j'k'}))
  \end{equation*}
for the $\ring_E$-basis $b_{jk}=e_jz^k$, $j,k=0,1,2$, of $\Lambda$. Here $e_j$, $j=0,1,2$, is any $\ring_E$-basis of $\ring_L$, which we may choose $\mathop{Gal}(L/E)$-invariant.
As $\trace_{\mathop{rd}}(lz)=0=\trace_{\mathop{rd}}(lz^2)=0$ for all $l\in L$, we easily compute
\begin{equation*}
  \mathop{disc}(\Lambda)\:=\:\det\begin{pmatrix} \trace(e_je_k)&0&0\\0&0&a\trace(e_j\rho(e_k))\\0&a\trace(e_j\rho^2(e_k))&0                                  
                                         \end{pmatrix}
\:=\:\bigl( -a^2\mathop{disc}(\ring_L)\bigr)^3\:.
\end{equation*}
As $\ring_L$ is the maximal $\ring_E$-order (of integral elements) in $L$, the order $\ring_D$ of integral elements is an $\ring_L$-module, and the different $\mathfrak D$ of $\ring_D$ 
is an $\ring_L$-module, too. It follows that the ideal norm $ \n(\mathfrak D)$ of $\mathfrak D$ is divisible by $\mathop{disc}(\ring_L)$.
But by~\cite[25.10]{reiner},
\begin{equation*}
 \mathop{disc}(\ring_D)\:=\: \bigl( \n(\mathfrak D)\bigr)^3\:,
\end{equation*}
and further, maximal orders belong to minimal discriminants and vice versa.
So in case $a\in\ring_E^\times$ is a unit, we see that $\mathop{disc}(\Lambda)$ is clearly minimal. It follows $\Lambda=\ring_D$.
\end{proof}

\subsection{Definiteness}\label{section-definitness}
In this section, let the ground field $F$ be totally real, and let $E=F(\sqrt{-d})$, where $d\in\ring_F$ is square free and totally positive, be an imaginary quadratic extension.
The hermitian form of section~\ref{section-unitary-groups} is called totally definite, if it is definite at all the archimedean places, equivalently, if the unitary group $\G(F_v)$
is compact for all $v\mid\infty$. 
At an archimedean place $v\mid\infty$ we have $F_v\cong \R$ and $E_v\cong \C$. So $L_v\cong \C\oplus\C\oplus\C$, where the embedding $L\hookrightarrow\C\oplus\C\oplus\C$ is given by
the three embeddings of $L$ to $E_v$, which we again denote by $\rho^j$, $j=0,1,2$, $l\mapsto(\rho^0(l),\rho(l),\rho^2(l))$.
Using the cyclic presentation, 
$\D(F_v)$ is isomorphic to $M_3(\C)$ equipped with the involution~(\ref{matrix-invol}). But on $M_3(\C)$ we have the obvious involution $\beta$ of the second kind given by conjugate transpose,
$\beta(M)=\bar M'$, and $M_3(\C)$ being central simple, there exists $H\in\GL_3(\C)$ such that $\alpha(M)=H^{-1}\beta(M)H^{-1}$. Obviously, this is satisfied by
\begin{equation*}
  H\:=\:\begin{pmatrix}\rho^0(b)&&\\&\rho(b)&\\&&\rho^2(b)\end{pmatrix}\:,
\end{equation*}
and $M\in M_3(\C)$ belongs to $\U(F_v)$ if and only if
\begin{equation*}
 \bar M'HM\:=\:H\:.
\end{equation*}
Thus, $\U(F_v)$ is isomorphic to the  unitary group associated to the standard hermitian form  given by  $H$. This is definite if and only if $\rho^0(b)$, $\rho(b)$ and $\rho^2(b)$ 
have the same sign. 
 In view of $\n_{M/F}(b)=\n_{E/F}(a)$ being positive for any embedding $L\hookrightarrow\C$, we actually have $\epsilon_v=1$ for all $v$. That is, $b$ must be totally positive.
We have found a simple criterion for definiteness.
\begin{prop}\label{prop-definite}
Assume the division algebra of Theorem~\ref{thm-involutions} is defined over an 
imaginary quadratic extension $E/F$ of a totally real number field $F$. Then the involution defined by the totally real number $b$ is totally definite if and only if
 $b$ is totally positive.
\end{prop}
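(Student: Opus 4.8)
The plan is to handle each archimedean place in isolation and to reduce the definiteness question to the signs of three real numbers, whose product is pinned down to be positive by the norm equation of Theorem~\ref{thm-involutions}. Fix $v\mid\infty$ and adopt the local description assembled above: $F_v\cong\R$, $E_v\cong\C$, the three embeddings $\rho^j$ identify $L_v$ with $\C\oplus\C\oplus\C$, and under the cyclic presentation $\D(F_v)$ is $M_3(\C)$ carrying the involution~(\ref{matrix-invol}). Transporting this involution into the conjugate--transpose involution $\beta(M)=\bar M'$ by a suitable hermitian $H\in\GL_3(\C)$ realizes $\U(F_v)$ as the unitary group of the hermitian form attached to $H$, and one checks that $H$ may be taken diagonal with entries the conjugates $\rho^0(b),\rho(b),\rho^2(b)$. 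Definiteness of the involution at $v$ is then precisely definiteness of this diagonal form.

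Next I would note that these diagonal entries are real. Because $E/F$ is imaginary quadratic, $\tau$ restricts to complex conjugation under every embedding $L\hookrightarrow\C$, so its fixed field $M=L^\tau$ is totally real and each $\rho^j(b)$ is a nonzero real number. A real diagonal hermitian form is definite exactly when all its diagonal entries share a sign: positive definite if $\rho^0(b),\rho(b),\rho^2(b)$ are all positive, negative definite if all are negative, and indefinite otherwise.

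The remaining step is to rule out the all-negative alternative using the norm relation. At the real place $v$ we have $\n_{E/F}(a)=a\bar a=|a|^2>0$, whereas $\n_{M/F}(b)=\rho^0(b)\rho(b)\rho^2(b)$; hence the equality $\n_{E/F}(a)=\n_{M/F}(b)$ from Theorem~\ref{thm-involutions} forces this triple product to be strictly positive at every $v\mid\infty$. Since the product of three negative numbers is negative, a positive product excludes the all-negative case, so ``common sign'' reduces to ``all positive''. Assembling the steps, the form is definite at $v$ if and only if $\rho^0(b),\rho(b),\rho^2(b)$ are all positive; letting $v$ run over the infinite places yields that the involution is totally definite if and only if $b$ is totally positive.

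The one place that calls for genuine verification is the local identification in the first step---confirming that conjugating the twisted involution~(\ref{matrix-invol}) into the conjugate--transpose involution is effected by a diagonal matrix whose signature is governed by the signs of the $\rho^j(b)$. This is exactly the computation recorded in the discussion preceding the statement; once it is granted, the rest is elementary sign bookkeeping powered solely by the positivity of $\n_{E/F}(a)$, and no deeper input is required.
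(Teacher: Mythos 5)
Your argument is correct and follows essentially the same route as the paper: the discussion preceding the proposition identifies $\U(F_v)$ at each archimedean place with the unitary group of the diagonal hermitian form $H=\mathrm{diag}(\rho^0(b),\rho(b),\rho^2(b))$, observes that definiteness means the three real conjugates of $b$ share a sign, and uses the positivity of $\n_{E/F}(a)=\n_{M/F}(b)$ to exclude the all-negative case. Your write-up merely spells out the sign bookkeeping slightly more explicitly than the paper does.
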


\section{Results on rational points of the unitary group}
\subsection{Fixed points}\label{section-fixed-points}
In the situation of Theorem~\ref{thm-involutions} we assume  the field $E$ to equal $F(\sqrt{-d})$, where $-d$ is a square-free element of $\ring_F$.
By the embedding (\ref{embedding}), an element of $\D(F)$ is given by the first row $(l_0,l_1,l_2)$ of the corresponding matrix, $l_0,l_1,l_2\in L$.
The unitary condition $g\alpha(g)=1$ with respect to $\alpha$ given by (\ref{matrix-involution}) for such an element is given by 
\begin{equation}\label{unit1b}
 l_0 \bar l_0+\rho(b)l_1\bar l_1+\rho(b)\rho^2(b)l_2\bar l_2\:=\:1
\end{equation}
together with
\begin{equation}\label{unit2b}
a \bar l_0 \rho(l_2)+\rho(b)\bar l_1 \rho(l_0)+\rho(b)\rho^2(b)\bar l_2 \rho(l_1)=0.
\end{equation}
The determinant condition is
\begin{equation}\label{det}
 N_{L/E}(l_0)+aN_{L/E}(l_1)+a^2N_{L/E}(l_2)-a\trace\nolimits_{L/E}(l_0\rho(l_1)\rho^2(l_2))=1.
\end{equation}
So an element $g=g(l_0,l_1,l_2)$ satisfying (\ref{unit1b}) and (\ref{unit2b}) belongs to the unitary group $U=\U(F)$ defined by the involution $\alpha$ on the division algebra. 
If it additionally satisfies (\ref{det}), then it belongs to the special unitary group $\mathop{SU}=\G(F)$.
We have an action of $\mathop{Gal}(L/E)=<\rho>$ on the cyclic algebra $\D(F)=L\oplus Lz\oplus Lz^2$ by automorphisms given by the action on the coefficients,
\begin{equation*}
 \rho(l_0+l_1z+l_2z^2)\:=\:\rho(l_0)+\rho(l_1)z+\rho(l_2)z^2\:,
\end{equation*}
for all $l_j\in L$. This coincides with the inner automorphism given by conjugation with $z^j$, $\rho^j(d)=z^j d z^{-j}$. The  $\mathop{Gal}(L/E)$-fixed points of $\D(F)$
obviously are those with coefficients  $l_j\in E$.

Let us collect some simple properties.
\begin{prop}\label{monomial_elements}
\begin{itemize}
 \item [(a)]
 If for an element $g(l_0,l_1,l_2)\in\U(F)$ one of the coefficients is zero, then $g=lz^j$, $l\in L$, is monomial.
 \item [(b)]
 The monomial elements $g=lz^j$, $j=0,1,2$, of $\U(F)$ are given by those $l\in L$ satisfying the norm equation 
 $\n_{L/M}(l)=1$, $\rho(b)\n_{L/M}(l)=1$, $\rho(b)\rho^2(b)\n_{L/M}(l)=1$, respectively, for $j=0$, $j=1$, $j=2$, respectively.
 \item [(c)]
 The monomial elements $g$  of $\G(F)$ are the elements given by $g=l$, where $l\in L^\times$ satisfies the norm equations
\begin{equation}\label{norm-cond-monomial}
 \n_{L/M}(l)\:=\:1\:=\:\n_{L/E}(l)\:.
\end{equation}
Especially, the $\mathop{Gal}(L/E)$-fixed monomial elements are given by the third roots of unity contained in $E$.
\end{itemize}
\end{prop}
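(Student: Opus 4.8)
The plan is to read everything off the three defining scalar equations~(\ref{unit1b}), (\ref{unit2b}) and~(\ref{det}), together with the one structural input from Theorem~\ref{thm-involutions} that $a$ is not a norm from $L$. Throughout I would record that $b\neq 0$ (since $\n_{M/F}(b)=\n_{E/F}(a)\neq 0$), so that $\rho(b)$ and $\rho^2(b)$ are units of $L$, and that $\n_{L/M}(l)=l\bar l$ while $\n_{L/E}(l)=l\rho(l)\rho^2(l)$.

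For part~(a) I would simply specialize~(\ref{unit2b}) to the case that one coefficient vanishes. If $l_0=0$, the first two summands of~(\ref{unit2b}) drop and there remains $\rho(b)\rho^2(b)\bar l_2\rho(l_1)=0$, forcing $l_1=0$ or $l_2=0$; if $l_1=0$ one is left with $a\bar l_0\rho(l_2)=0$, forcing $l_0=0$ or $l_2=0$; and if $l_2=0$ one is left with $\rho(b)\bar l_1\rho(l_0)=0$, forcing $l_0=0$ or $l_1=0$. In every case a second coefficient is killed, so $g=lz^j$ is monomial. For part~(b) I would substitute such a monomial (two coefficients zero) into the unitary conditions: each summand of~(\ref{unit2b}) then contains a vanishing factor, so~(\ref{unit2b}) holds automatically, while~(\ref{unit1b}) collapses to the single surviving term $l\bar l=1$, $\rho(b)l\bar l=1$, or $\rho(b)\rho^2(b)l\bar l=1$ according as $j=0,1,2$. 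Rewriting $l\bar l=\n_{L/M}(l)$ gives exactly the three stated norm equations.

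For part~(c) I would first compute the reduced norm of a monomial. Since the embedding~(\ref{embedding}) sends $z$ to a matrix of determinant $a$ and $l$ to $\mathrm{diag}(l,\rho(l),\rho^2(l))$, one gets $\n_{rd}(lz^j)=a^j\,\n_{L/E}(l)$, which is also what~(\ref{det}) yields on setting all but the relevant coefficient to zero. Hence a monomial $lz^j\in\mathop{SU}$ must satisfy $\n_{L/E}(l)=a^{-j}$. For $j=1$ this says $a^{-1}\in\n_{L/E}(L^\times)$, hence $a\in\n_{L/E}(L^\times)$, contradicting that $D$ is a division algebra. The main obstacle is the case $j=2$, where the condition reads $a^{-2}\in\n_{L/E}(L^\times)$, i.e. $a^2\in\n_{L/E}(L^\times)$, and ruling this out is not formal: I would settle it by the Brauer-class argument that under the identification of $E^\times/\n_{L/E}(L^\times)$ with the relative Brauer group $\mathrm{Br}(L/E)\subseteq\mathrm{Br}(E)$, the class of $a$ corresponds to $[D]$, which has order exactly $3$; hence $2\,[a]\neq 0$ and $a^2\notin\n_{L/E}(L^\times)$. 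Thus only $j=0$ survives, and combining~(\ref{norm-cond-monomial}) from part~(b) with the reduced-norm condition gives $\n_{L/M}(l)=1=\n_{L/E}(l)$.

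Finally, for the $\mathop{Gal}(L/E)$-fixed monomials I would impose $l\in E$, so that $\n_{L/E}(l)=l^3$; the condition $\n_{L/E}(l)=1$ then forces $l$ to be a third root of unity. Conversely, any third root of unity $\zeta\in E$ lies in $E^\times$, satisfies $\zeta^3=1$, and satisfies $\n_{L/M}(\zeta)=\zeta\tau(\zeta)=1$ automatically: for $\zeta=1$ trivially, and for a primitive $\zeta$ because $\zeta\notin F$ (as $F$ is totally real) forces $\tau(\zeta)=\zeta^{-1}$. Hence the fixed monomial elements of $\mathop{SU}$ are precisely the third roots of unity contained in $E$.
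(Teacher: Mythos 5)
Your proof is correct and follows essentially the same route as the paper: parts (a) and (b) are read off the unitary conditions (\ref{unit2b}) and (\ref{unit1b}), and part (c) uses the determinant condition $a^j\n_{L/E}(l)=1$ together with $a,a^2\notin\n_{L/E}$ to force $j=0$. The only difference is that you supply a justification (via the order of $[D]$ in $\mathrm{Br}(L/E)$) for $a^2\notin\n_{L/E}(L^\times)$, which the paper merely asserts; that argument is fine, though the elementary observation that $a^3=\n_{L/E}(a)$, so $a^2=\n_{L/E}(l)$ would give $a=\n_{L/E}(a/l)$, works just as well.
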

\begin{proof}[Proof of Proposition~\ref{monomial_elements}]
 If $l_k=0$, then by the unitary condition (\ref{unit2b}) a second coefficient $l_n$, $n\not= k$, is zero, too. So $g=l_jz^j$ for $j\not=k,n$. This is (a).

 Concerning (b), for a monomial element $g=lz^j$ the unitary condition (\ref{unit1b}) clearly simplifies to the stated ones.
 Concerning (c), $g=lz^j\in\G(F)$ must satisfy the determinant condition (\ref{det}), $a^j\n_{L/E}(l)=1$.
  As $a$ and $a^2$ don't belong to $\n_{L/E}$, we must have $j=0$, and hence (\ref{norm-cond-monomial}).
In the special case $l\in E^\times$, we have $l\bar l=1=l^3$, i.e. $l$ is a third root of unity. The primitive third roots of unity belong to $E=F(\sqrt{-d})$ if and only if $d=3$, 
i.e. $E=F(\zeta_6)$.
\end{proof}
 By Hilbert~90
  an element $l\in L^\times$  satisfying the first condition of (\ref{norm-cond-monomial}) in Proposition~\ref{monomial_elements} is of the form
 \begin{equation*}
  l\:=\:\frac{y_0+\sqrt{-d}y_1}{y_0-\sqrt{-d}y_1}\:.
 \end{equation*}
 with $y_0,y_1\in M$.
In order to satisfy the second condition non-trivially, we may assume $y_1$ to be non-zero and normalize it $y_1=1$.
Then the second condition is equivalent to $\trace_{L/E}(y_0\rho(y_0))=d$.

Although conjugation of the coefficients $l_j\in L$, $l_0+l_1z+l_2z^2\mapsto \bar l_0+\bar l_1z+\bar l_2z^2$, does not define an algebra homomorphism, 
we can ask for elements of $\U(F)$ and $\G(F)$ whose coefficients
are fixed under conjugation, respectively mapped to their negative, i.e. $l_j\in M$ for all $j$, respectively $l_j\in\sqrt{-d}M$ for all $j$. 

\begin{prop}\label{M-points}
\begin{itemize}
 \item [(a)] 
 The only element of $\G(F)$ given by a first row $(l_0,l_1,l_2)$ such that $\bar l_j=\epsilon l_j$, $j=0,1,2$, with $\epsilon\in\{\pm1\}$, is the identity.
 \item [(b)]
 The elements of $\U(F)$ given by a first row $(l_0,l_1,l_2)$ such that $\bar l_j= l_j$, $j=0,1,2$, are $g=\pm 1$, $g=lz$ if  $l\in M$ is a solution of $\rho(b)l^2=1$, and 
 $g=lz^2$ if $l\in M$ is a solution of $\rho(b)\rho^2(b)l^2=1$.
 \item [(c)]
 The elements of $\U(F)$ given by a first row $(l_0,l_1,l_2)$ such that $\bar l_j= -l_j$, $j=0,1,2$, are $g=l$ if $l\in\sqrt{-d}M$ is a forth root of unity, 
 $g=\sqrt{-d}\cdot lz$ if $l\in M$ is a solution of $d\rho(b)l^2=1$, and 
 $g=\sqrt{-d}\cdot lz^2$ if $l\in M$ is a solution of $d\rho(b)\rho^2(b)l^2=1$.
\end{itemize}
 \end{prop}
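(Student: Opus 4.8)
The plan is to impose the eigenvalue conditions $\bar l_j=\epsilon l_j$ on the defining equations (\ref{unit1b}), (\ref{unit2b}), (\ref{det}) and exploit that $\epsilon^2=1$ collapses the conjugates. For part (a), with $\epsilon=+1$ all $l_j\in M$, so $\bar l_j=l_j$ and the unitary norm equation (\ref{unit1b}) becomes $l_0^2+\rho(b)l_1^2+\rho(b)\rho^2(b)l_2^2=1$. The key observation I would use is Proposition~\ref{monomial_elements}(a): if I can force one coefficient to vanish then $g$ is monomial, and the monomial case is already fully classified by part (c) of that proposition, reducing to third roots of unity in $L$, of which the only one fixed by $\tau$ (hence lying in $M$) is $1$. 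So the crux is to show that no genuinely non-monomial solution survives the determinant condition.

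First I would rewrite (\ref{unit2b}) under $\bar l_j=\epsilon l_j$; the conjugations disappear and I get $a\, l_0\rho(l_2)+\rho(b)l_1\rho(l_0)+\rho(b)\rho^2(b)l_2\rho(l_1)=0$ (the $\epsilon$ pulls out as an overall factor and cancels). Then I would feed the three relations into the determinant condition (\ref{det}). The guiding idea is that (\ref{det}) expresses $1$ as $N_{L/E}(l_0)+aN_{L/E}(l_1)+a^2N_{L/E}(l_2)-a\,\trace_{L/E}(l_0\rho(l_1)\rho^2(l_2))$, and since $a\notin\n_{L/E}$ and $a^2\notin\n_{L/E}$ (from the division-algebra hypothesis of Theorem~\ref{thm-involutions}), an $E$-rational identity of this shape strongly constrains the ``weights'' of $a$ that can appear. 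I expect to combine (\ref{unit1b}) and (\ref{det}) to produce an equation forcing the mixed trace term and the higher-norm terms to vanish, isolating $N_{L/E}(l_0)=1$ together with $l_1=l_2=0$; then part~(a) of Proposition~\ref{monomial_elements} finishes the argument and the earlier monomial classification pins $g=1$. For $\epsilon=-1$ the only adjustment is that $l_j\in\sqrt{-d}M$, so $l_j^2\in -dM\subset F$ and $\n_{L/E}(l_j)$ again lands in $F$; the same norm-obstruction argument applies verbatim, and the fourth-root-of-unity candidate $g=l$ with $l\in\sqrt{-d}M$ must also satisfy $l^3=1$ (reduced norm one), forcing $l=1$ since a nonreal fourth root cannot be a cube root.

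For parts (b) and (c) I would drop the determinant condition and work only with the unitary equations (\ref{unit1b}) and (\ref{unit2b}). Here $g$ need not be monomial \emph{a priori}, but the same vanishing mechanism from Proposition~\ref{monomial_elements}(a) applies: I would show (\ref{unit2b}) forces two of the three coefficients to be zero unless the element is central. Concretely, under $\bar l_j=l_j$ the cross-term equation (\ref{unit2b}) is an $M$-linear relation among $l_0\rho(l_2)$, $l_1\rho(l_0)$, $l_2\rho(l_1)$; a short argument separating the $E=F(\sqrt{-d})$ components (real versus $\sqrt{-d}$-part) shows the only solutions are $g=\pm1$ or the two monomials $lz$, $lz^2$, whose surviving norm conditions come directly from Proposition~\ref{monomial_elements}(b) specialized to $l\in M$, namely $\rho(b)l^2=1$ and $\rho(b)\rho^2(b)l^2=1$. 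Part (c) is identical after substituting $l_j=\sqrt{-d}\,m_j$ with $m_j\in M$, which introduces the extra factor $d=(\sqrt{-d})(-\sqrt{-d})\cdot(-1)$ into the norm conditions, yielding $d\rho(b)l^2=1$ and $d\rho(b)\rho^2(b)l^2=1$, and the diagonal monomial $g=l\in\sqrt{-d}M$ must be a fourth root of unity by (\ref{unit1b}).

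The main obstacle I anticipate is the determinant step in part (a): showing that the mixed trace term $\trace_{L/E}(l_0\rho(l_1)\rho^2(l_2))$ cannot conspire with the norm terms to satisfy (\ref{det}) without forcing $l_1=l_2=0$. The clean way around this is to avoid computing the trace term directly and instead argue $p$-adically or via the norm obstruction: one writes the putative identity in $E$, projects to $F$ and to the $\sqrt{-d}$-component, and uses that $a\in E\setminus F$ together with $a\notin\n_{L/E}$ to rule out any nontrivial contribution weighted by $a$ or $a^2$. This turns what looks like a messy polynomial identity into a short divisibility/non-norm argument, which I expect is the intended ``simple algebraic and arithmetic'' route advertised in the introduction.
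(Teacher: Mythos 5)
Your treatment of parts (b) and (c) is essentially the paper's proof: under $\bar l_j=\epsilon l_j$ all of $l_0\rho(l_2)$, $l_1\rho(l_0)$, $l_2\rho(l_1)$ and the $b$-factors lie in $M$, so since $a\in E\setminus F$ equation (\ref{unit2b}) splits into $l_0\rho(l_2)=0$ and $l_1\rho(l_0)+\rho^2(b)l_2\rho(l_1)=0$, which forces two coefficients to vanish; monomiality plus evaluation of (\ref{unit1b}) then gives exactly the listed norm conditions. (Small slips there: $d=(\sqrt{-d})(-\sqrt{-d})$ with no extra $-1$, and the splitting forces monomiality outright, not ``unless the element is central.'')

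The genuine problem is your part (a). You propose to establish monomiality there by feeding the relations into the determinant condition (\ref{det}) and invoking $a\notin\n_{L/E}$, and you yourself flag this as the main obstacle without carrying it out. The heuristic does not work as stated: since $l_j\in M$ (or $\sqrt{-d}M$), the quantities $\n_{L/E}(l_j)$ and $\trace_{L/E}(l_0\rho(l_1)\rho^2(l_2))$ all land in $F$, so projecting (\ref{det}) onto the $F$- and $\sqrt{-d}F$-components of $E$ yields two $F$-linear relations among these four elements of $F$ with coefficients coming from $a=a_0+a_1\sqrt{-d}$ --- and nothing in that picture is of the form ``$a$ times a norm equals an element of $F$,'' so the hypothesis $a\notin\n_{L/E}$ (or Lemma~\ref{algebraic-lemma}) has no purchase. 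The detour is also unnecessary: an element of $\G(F)$ lies in $\U(F)$, so the very splitting of (\ref{unit2b}) that you use in (b) and (c) already forces monomiality in (a); then Proposition~\ref{monomial_elements}(c) gives $j=0$ with $\n_{L/M}(l)=\epsilon l^2=1$ and $\n_{L/E}(l)=l^3=1$, whence $l=1$ for $\epsilon=1$ and no solution for $\epsilon=-1$ (your ``forcing $l=1$'' in the $\epsilon=-1$ case should read ``no solution,'' since $l^2=-1$ and $l^3=1$ are incompatible). With part (a) rerouted through that mechanism the proof is complete and coincides with the paper's.
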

\begin{proof} Let $\epsilon\in\{\pm1\}$.
 Let $(l_0,l_1,l_2)$ be the first row of an element $g$ of $\U(F)$ satisfying $\bar l_j=\epsilon l_j$ for $j=0,1,2$.
 As $a\in E\setminus F$, condition (\ref{unit2b}) splits into two conditions
 \begin{equation*}
  l_0\rho(l_2)=0\quad \textrm{ and }\quad l_1\rho(l_0)+\rho^2(b)l_2\rho(l_1)=0.
 \end{equation*}
By the first one, $l_0$ or $l_2$ is zero. Then, by the second condition the other one or $l_1$ is zero, too.
So $g=lz^k$ is monomial.
The proposition now follows easily by evaluation of condition (\ref{unit1b}).
\end{proof}
The involution $\alpha$ is an extension of the conjugation on $L$ to an algebra anti-automorphism. For this we have the following theorem, 
which has Corollaries~\ref{no-elements-of-even-order} and \ref{reflections-non-division} as  surprising consequences.
\begin{thm}\label{involution-eigenspaces}
There is no element of $\G(F)$ apart from the identity which is an eigenvector for the involution $\alpha$, i.e. $\alpha(g)\not=\pm g$ for all $g\in\G(F)$.
The eigenvectors of $\U(F)$ under $\alpha$ are the forth roots of unity in $E^\times$.
\end{thm}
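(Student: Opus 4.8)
The plan is to bypass the explicit coefficient relations entirely and reduce the eigenvector condition to a single quadratic equation inside $D$. Suppose $g\in\U(F)$ satisfies $\alpha(g)=\epsilon g$ with $\epsilon\in\{+1,-1\}$. First I would substitute this into the defining relation $\alpha(g)g=1$ of the unitary group, obtaining $\epsilon g^2=1$ and hence $g^2=\epsilon$ (using $\epsilon^2=1$). Thus every $\alpha$-eigenvector in $\U(F)$ is a square root of $+1$ or of $-1$ inside the division algebra $D$.

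The key step is then a degree argument exploiting that $D$ is a central division algebra of degree three over $E$. Since $D$ has no zero divisors, the commutative subalgebra $E[g]$ is an integral domain, finite-dimensional over $E$, hence a field, and by the double centralizer theorem every subfield containing the center $E$ has degree dividing $3$, i.e.\ degree $1$ or $3$. In the hermitian case $\epsilon=1$ the relation $g^2=1$ factors as $(g-1)(g+1)=0$, so $g=\pm1$. In the skew-hermitian case $\epsilon=-1$ the element $g$ satisfies $X^2+1=0$, whence $[E(g):E]\leq 2$; combined with the degree constraint this forces $[E(g):E]=1$, that is $g\in E$ with $g^2=-1$, so $g=\pm\sqrt{-1}$, which is possible exactly when $\sqrt{-1}\in E$.

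Collecting the two cases, the $\alpha$-eigenvectors of $\U(F)$ are precisely the elements of $E^\times$ with $g^2=\pm1$, i.e.\ the fourth roots of unity contained in $E^\times$; the reverse inclusion is the direct check that each such $g$ lies in $\U(F)$ and satisfies $\alpha(g)=\bar g=\pm g$ on $E$. For the special unitary group I would then impose the reduced-norm condition: for a central element $g\in E^\times$ one has $\n_{rd}(g)=g^3$, so among the fourth roots of unity only $g=1$ satisfies $\n_{rd}(g)=1$, since $\n_{rd}(-1)=-1$ and $\n_{rd}(\pm\sqrt{-1})=\mp\sqrt{-1}$. This yields the first assertion, that the identity is the only $\alpha$-eigenvector in $\G(F)$.

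The only point requiring genuine care — the main obstacle, such as it is — is the degree argument ruling out a nontrivial skew-hermitian element: one must be certain that $g^2=-1$ cannot be realized by an element generating a proper degree-two extension of $E$ inside $D$, which is exactly what the divisibility $[E(g):E]\mid 3$ forbids. Everything else reduces to short direct verifications, and no computation with the relations (\ref{unit1b})--(\ref{det}) is needed; alternatively one could derive $g^2=\epsilon$ and the conclusion $g\in E$ from those equations together with Proposition~\ref{M-points}.
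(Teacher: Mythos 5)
Your proof is correct, and it takes a genuinely different and substantially shorter route than the paper's. The paper works entirely inside the cyclic presentation: it writes the condition $\alpha(g)=\pm g$ in the coordinates $(l_0,l_1,l_2)$, multiplies the unitary relation (\ref{unit2b}) by $l_1$ to obtain $\rho(b)l_1\bar l_1(l_0+\rho(l_0))=\mp a\n_{L/E}(l_1)$, and then forces $l_1=0$ by comparing the Galois behaviour of the two sides and invoking a dedicated arithmetic lemma (Lemma~\ref{algebraic-lemma}, $aF\cap \n_{L/E}=\{0\}$ --- which is where the division property $a\notin\n_{L/E}$ enters). You replace all of this by the observation that $\alpha(g)=\epsilon g$ together with $\alpha(g)g=1$ yields $g^2=\epsilon$ immediately, after which the division property enters in a cleaner form: absence of zero divisors disposes of $g^2=1$ except for $g=\pm1$, and the constraint $[E(g):E]\in\{1,3\}$ for commutative subfields of a degree-three division algebra rules out a square root of $-1$ outside $E$. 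The endgame (fourth roots of unity in $E^\times$; $\n_{rd}(g)=g^3$ eliminating everything but $g=1$ in $\G(F)$) agrees with the paper. What your approach buys: it is coordinate-free, needs neither Lemma~\ref{algebraic-lemma} nor the explicit form (\ref{matrix-involution}) of $\alpha$, and visibly generalizes to any central division algebra of odd degree carrying an involution of the second kind. One small caveat, which concerns the statement rather than your argument and affects the paper's proof equally: the reverse inclusion requires that a primitive fourth root of unity in $E$ not lie in $F$ (otherwise $\alpha(g)g=\bar gg=-1$ for $g=\sqrt{-1}$ and $g\notin\U(F)$); under the paper's standing assumption that $E=F(\sqrt{-d})$ with $F$ totally real this does not arise.
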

\begin{cor}\label{no-elements-of-even-order}
 The group $\G(F)$ does not contain non-trivial elements of even order.
 Especially, there are no reflections in $\G(F)$.
\end{cor}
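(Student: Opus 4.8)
The plan is to reduce the statement to the non-existence of involutions in $\G(F)$, which is immediate from Theorem~\ref{involution-eigenspaces}. First I would exploit the unitary condition: every $g\in\G(F)\subset\U(F)$ satisfies $\alpha(g)g=1$, hence $\alpha(g)=g^{-1}$. Now suppose $h\in\G(F)$ has order two, i.e. $h^2=1$ and $h\neq 1$. Then $h^{-1}=h$, so combining with the unitary relation gives $\alpha(h)=h^{-1}=h$. Thus $h$ is an $\alpha$-eigenvector for the eigenvalue $+1$. By Theorem~\ref{involution-eigenspaces} the only $\alpha$-eigenvector lying in $\G(F)$ is the identity, which forces $h=1$, contradicting $h\neq 1$. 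Hence $\G(F)$ contains no element of order two.

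Next I would deduce the absence of \emph{all} non-trivial elements of even order by a standard power-reduction. If some $g\in\G(F)$ had order $2m$ with $m\geq 1$, then $h:=g^m$ again lies in $\G(F)$, since $\G(F)$ is a group, and satisfies $h^2=g^{2m}=1$. Moreover $h$ is non-trivial: as the order of $g$ is exactly $2m$ and $m<2m$, we have $g^m\neq 1$. Therefore $h$ is an involution, contradicting the first step. This rules out every non-trivial element of even order.

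Finally, the assertion about reflections follows at once: a reflection of the hermitian form is by definition an involution, that is, an element of order two; having excluded order-two elements in $\G(F)$, there can be no reflections.

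I do not expect a genuine obstacle here, since the entire weight of the argument is carried by Theorem~\ref{involution-eigenspaces}, whose proof identifies the $\alpha$-eigenvectors. The only points needing a moment's care are the bookkeeping that $g^m$ is genuinely non-trivial and of order exactly two, and the identification of a reflection with an order-two element; both are routine and warrant no computation.
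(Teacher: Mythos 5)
Your proposal is correct and follows the paper's own argument exactly: reduce to an element of order two via $h=g^m$, then use $\alpha(h)=h^{-1}=h$ to exhibit $h$ as a $+1$-eigenvector of $\alpha$, contradicting Theorem~\ref{involution-eigenspaces}. The only difference is that you spell out the power-reduction step, which the paper leaves implicit.
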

\begin{proof}[Proof of Corollary~\ref{no-elements-of-even-order}]
 If there was an element of finite even order, then there would exist an element $g\in \G(F)$ of order two as well. 
 But then $g=g^{-1}=\alpha(g)$ would belong to the $+1$-eigenspace of the involution $\alpha$, in contradiction to Theorem~\ref{involution-eigenspaces}.
\end{proof}
\begin{cor}\label{reflections-non-division}
 Let $H$ be a hermitian matrix in $M_3(\C)$, and let $\SU_3(H)=\{g\in\SL_3(\C)\mid \bar g'Hg=H\}$ denote the associated special unitary group.
 Let $R\in\SU_3(H)$ be a reflection. Then $R$ is not contained in any  global division algebra
 $(D,\alpha)$ of degree three with involution $\alpha$ of the second kind  such that for some infinite place $v$,  $\SU(D_v)\cong \SU_3(H)$.
 Similarly, no hermitian or skew-hermitian element of $\SU_3(H)$ arises in this way.
\end{cor}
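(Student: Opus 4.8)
The plan is to reduce the assertion to Theorem~\ref{involution-eigenspaces} and Corollary~\ref{no-elements-of-even-order} by transporting the conditions ``reflection'' and ``(skew-)hermitian'' from the local group $\SU_3(H)$ back to the rational points $\G(F)=\SU$ of the global special unitary group. I first fix the meaning of ``$R$ is contained in $(D,\alpha)$'': under the given isomorphism $\iota_v\colon\SU(D_v)\xrightarrow{\sim}\SU_3(H)$ at the infinite place $v$, the element $R$ should be the image $\iota_v(g)$ of some $g\in\G(F)=\SU(D)$. Since $F$ embeds into $F_v$, the natural map $\G(F)\hookrightarrow\SU(D_v)$ is an injective group homomorphism, so $g$ and $R=\iota_v(g)$ have the same order, and because $\alpha$ is defined over $F$, the relation $\alpha(g)=\pm g$ holds in $\G(F)$ if and only if it holds for $R$ after applying $\iota_v$.

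For the reflection statement I would argue by contradiction: a reflection $R\in\SU_3(H)$ is a non-trivial element of order two, hence so is the corresponding $g\in\G(F)$. But Corollary~\ref{no-elements-of-even-order} forbids non-trivial elements of even order in $\G(F)$, so $g=1$ and therefore $R=\iota_v(1)=1$, contradicting that $R$ is a reflection.

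For the hermitian and skew-hermitian statement I would use the local description of Section~\ref{section-definitness}, where $\D(F_v)\cong M_3(\C)$ and the involution $\alpha$ is conjugate to the conjugate-transpose involution $\beta$ via $H$. Under this identification the conditions ``hermitian'' and ``skew-hermitian'' for an element $R$ of $\SU_3(H)$ translate into $\alpha(R)=R$ and $\alpha(R)=-R$ respectively, i.e.\ into $R$ being a $+1$- or $-1$-eigenvector of $\alpha$. If such an $R$ were equal to $\iota_v(g)$ for some $g\in\G(F)$, then $\alpha(g)=\pm g$ in $\G(F)$; by Theorem~\ref{involution-eigenspaces} the only eigenvector of $\alpha$ in $\G(F)$ is the identity, so $R=1$, again a contradiction.

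The main obstacle is precisely this local--global dictionary: I must verify carefully that the globally defined involution $\alpha$ corresponds at $v$ to the adjoint involution attached to the hermitian form $H$ defining $\SU_3(H)$ (up to the conjugation recorded in Section~\ref{section-definitness}), so that ``(skew-)hermitian with respect to $H$'' and ``$\alpha$-eigenvector'' genuinely coincide, and that the order and eigenvector relations transfer faithfully along $\iota_v$ and the embedding $\G(F)\hookrightarrow\SU(D_v)$. Once this is established, the reflection case is in fact subsumed by the hermitian case, since any unitary involution $R$ satisfies $\alpha(R)=R^{-1}=R$ and is therefore hermitian.
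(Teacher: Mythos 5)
Your proposal is correct and is essentially the paper's own (implicit) argument: the paper offers no separate proof of Corollary~\ref{reflections-non-division}, treating it as an immediate consequence of Theorem~\ref{involution-eigenspaces} and Corollary~\ref{no-elements-of-even-order} via exactly the dictionary you describe, namely that under the identification of Section~\ref{section-definitness} the involution $\alpha$ becomes the adjoint involution of $H$, so (skew-)hermitian elements of $\SU_3(H)$ coming from $\G(F)$ would be $\alpha$-eigenvectors and reflections would be nontrivial elements of order two. Your closing remark that the reflection case is subsumed by the hermitian case is a correct and welcome refinement.
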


For the proof of Theorem~\ref{involution-eigenspaces} we need the following lemma.
\begin{lem}\label{algebraic-lemma}
If $a$ satisfies the conditions of Theorem~\ref{thm-involutions}, then it holds $aF\cap N_{L/E}=\{0\}$.
\end{lem}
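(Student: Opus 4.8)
The plan is to read the symbol $\n_{L/E}$ in the statement as the norm set $\n_{L/E}(L^\times)\cup\{0\}$, where $\n_{L/E}(L^\times)$ is a subgroup of $E^\times$, and to show that no element $af$ with $f\in F^\times$ lies in this norm group; since $af=0$ forces $f=0$, this then yields $aF\cap\n_{L/E}=\{0\}$. First I would record two structural facts about the norm group. Since every $e\in E^\times$ is fixed by $\rho$, one has $\n_{L/E}(e)=e\,\rho(e)\,\rho^2(e)=e^3$, so all cubes of $E^\times$ are norms; in particular $a^3=\n_{L/E}(a)\in\n_{L/E}(L^\times)$. Next, the hypothesis of Theorem~\ref{thm-involutions} gives $a\bar a=\n_{E/F}(a)=\n_{M/F}(b)$, and since $b\in M\subset L$ this $M/F$-norm coincides with $\n_{L/E}(b)$, so $a\bar a\in\n_{L/E}(L^\times)$ as well.

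The second ingredient is that the norm group is stable under $\tau$. Because $\rho$ and the extension of $\tau$ both lie in the abelian group $\mathop{Gal}(L/F)\cong C_6$, they commute, whence $\tau(\n_{L/E}(l))=\bar l\,\rho(\bar l)\,\rho^2(\bar l)=\n_{L/E}(\bar l)$ for every $l\in L^\times$. With these in hand, the heart of the argument is the claim that $\tfrac{a}{\bar a}\notin\n_{L/E}$. Indeed, were $\tfrac{a}{\bar a}$ a norm, multiplying by $a\bar a\in\n_{L/E}(L^\times)$ would place $a^2$ in the norm group; combined with $a^3$ already being a norm, this would force $a=a^3/a^2\in\n_{L/E}(L^\times)$, contradicting the division-algebra condition $a\notin\n_{L/E}$ of Theorem~\ref{thm-involutions}.

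To finish, suppose toward a contradiction that $af\in\n_{L/E}$ for some $f\in F^\times$. Applying $\tau$ and using both the $\tau$-stability of the norm group and $\bar f=f$ (as $f\in F$), we obtain $\bar a f\in\n_{L/E}$. Dividing the two norms gives $\tfrac{af}{\bar a f}=\tfrac{a}{\bar a}\in\n_{L/E}$, contradicting the claim. Hence $af\notin\n_{L/E}$ for all $f\in F^\times$, and the only element common to $aF$ and $\n_{L/E}$ is $0$. I expect the only substance to lie in isolating the single observation that the division-algebra hypothesis $a\notin\n_{L/E}$ propagates, through the two relations $a^3\in\n_{L/E}$ and $a\bar a\in\n_{L/E}$, to $\tfrac{a}{\bar a}\notin\n_{L/E}$; once that is in place the $\tau$-symmetry of the norm group makes the conclusion immediate, so the main obstacle is conceptual rather than computational.
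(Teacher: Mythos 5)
Your proof is correct. It takes a route that is close in spirit to the paper's but organized differently. The paper applies $\n_{L/M}$ (equivalently $\n_{E/F}$) to the hypothetical norm $aq$, obtaining $a\bar a q^2\in \n_{M/F}(M^\times)$, cancels $a\bar a=\n_{M/F}(b)$ to get $q^2\in\n_{M/F}(M^\times)$, then uses the ``squares-to-cubes'' trick for the cubic extension $M/F$ (if $q^2=\n_{M/F}(w)$ then $q=\n_{M/F}(w^2/q)$ since $q^3$ is automatically a norm) to conclude $q\in\n_{M/F}(M^\times)\subset\n_{L/E}(L^\times)$ and hence $a=(aq)/q\in\n_{L/E}$, a contradiction. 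You instead eliminate the scalar $f$ at the outset by observing that the norm group is $\tau$-stable (legitimate, since $\rho$ and $\tau$ commute in the abelian group $\mathop{Gal}(L/F)$), so $af\in\n_{L/E}$ forces $a/\bar a\in\n_{L/E}$; you then combine the two memberships $a\bar a=\n_{L/E}(b)$ and $a^3=\n_{L/E}(a)$ to land on $a\in\n_{L/E}(L^\times)$, contradicting the division-algebra condition. Both arguments rest on the same three inputs ($a\bar a$ is a norm via $b$, cubes of base-field elements are norms, and $a$ itself is not a norm), but yours stays entirely inside the single group $\n_{L/E}(L^\times)$ and dispenses with the paper's auxiliary reduction of $F^\times\cap\n_{L/E}$ to norms of elements of $M$; it is a slightly leaner decomposition of essentially the same idea.
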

\begin{proof}[Proof of Lemma~\ref{algebraic-lemma}]
 First we notice that if $q\in F\cap N_{L/E}$, then there exists $x\in M$ such that $N_{L/E}(x)=q$.
 As, let $y\in L$ be such that $N_{L/E}(y)=q$. Let $z=N_{L/M}(y)$. By the compatibility of norms we have $q^2=N_{L/M}(N_{L/E}(y))=N_{M/F}(z)$. 
 This implies $N_{L/E}(\frac{z^2}{q})=N_{M/F}(\frac{z^2}{q})=\frac{q^4}{q^3}=q$.
 Now assume that for some $q\in F^\times$ we have $aq\in N_{L/E}$. It follows that $N_{L/M}(aq)=a\tau(a)q^2$ belongs to $N_{M/F}$. So by an argument analog to the above, $q\in N_{M/F}$. 
 But this implies $a\in N_{L/E}$, a contradiction.
\end{proof}
\begin{proof}[Proof of Theorem~\ref{involution-eigenspaces}]
 For an element $g\in\mathbb D(F)$ given by a first row $(l_0,l_1,l_2)$, the condition $\alpha(g)=\pm g$ reduces to $l_0=\pm\bar l_0$ and $l_2=\pm \frac{b}{a}\rho^2(\bar l_1)$.
 Now assuming $g$ to belong to $\U(F)$, the unitary condition (\ref{unit2b}) multiplied by $l_1$ gives the following equation.
 \begin{equation*}
  \rho(b)l_1\bar l_1(l_0+\rho(l_0))=\mp al_1\rho(l_1)\rho^2(l_1).
 \end{equation*}
The right hand side is $\mp aN_{L/E}(l_1)$, which belongs to $E$. While the conjugate of the left hand side is $\pm \rho(b)l_1\bar l_1(l_0+\rho(l_0))$, 
so belongs either to $M$ or to $\sqrt{-d}M$.
It follows that $aN_{L/E}(l_1)\in E \cap M=F$, respectively $aN_{L/E}(l_1)\in E\cap \sqrt{-d}M=\sqrt{-d}F$. 
Both cases give the condition $N_{L/E}(\bar l_1')\in aF$ (where $l_1'=l_1$ in the first case, and $l_1'=l_1/\sqrt{-d}$ in the second case). 
By Lemma~\ref{algebraic-lemma},  $l_1=0$ hence $l_2=0$, and  $g=l_0$ must satisfy $l_0^2=\pm 1$, so $l_0$ is a forth root of unity in $L$. As $L/E$ does not admit a subextension of degree two,
$l_0$ already belongs to $E$. For $l_0\in\G(F)$ we must have $l_0^3=1$, so $l_0=1$.
\end{proof}


\subsection{Integer valued points}\label{section-integers}
We assume $F$ to be totally real and $E/F$ to be  imaginary quadratic.
In working with integer valued points we have to take into account the discussion of their definition in section~\ref{section-unitary-groups}.
But even in the case of $a\in E^\times$ or $b\in M^\times$ not being units in the corresponding rings of integers, to ask for  possible denominators
of the coefficients of the $F$-valued points in the cyclic presentation  is interesting.

We give an answer of this question in two special cases. 
First, for the case of monomial elements. Second, notice that in case the quantity $b$ defining the involution $\alpha$ can be chosen to belong to $F$, 
the subgroups $\U(F)^\rho$ and $\G(F)^\rho$ of $\mathop{Gal}(E/F)$-fixed points in $\U(F)$ and $\G(F)$, respectively, give themselves rise to group schemes over $F$.
They are associated to the unitary, respectively, special unitary group for the hermitian form on $E^3$ induced by the involution of (\ref{matrix-invol}) restricted to the subalgebra $M_3(E)$ of $M_3(L)$.
We characterize the denominators of $\U(F)^\rho$ and $\G(F)^\rho$ in this case.

\begin{defn}
 For a set $S$ of prime ideals of the number field $F$ we denote by $\ring_F(S)$ the subring of $F$ in which exactly the prime ideals in $S$ are invertible.
\end{defn}
\begin{defn}
\begin{itemize}
 \item [(a)] We say a  prime ideal $\pp$ of $F$ satisfies \textbf{Property A} for the extension $L/E/F$, if $\pp$ does not contain two, and if $\pp$ is inert in $E$ but splits in $L$.
 \item [(b)] 
 We say a prime ideal $\pp$ of $F$ satisfies \textbf{Property B} for the extension $E$, if $\pp$ does not contain two, and $\pp$ is either inert or ramified in $E$ such that 
 ${F_\pp}$ does not contain the sixth roots of unity.
\end{itemize}
\end{defn}

\begin{prop}\label{prop-S-monomials}
 Let $S$ be a set of primes $\pp$ of $F$ satisfying Property $A$, and for which the valuations $v_\pp(b)$ are zero. 
 Then the monomial elements $g=lz^j\in\G(F)$ with $l\in\ring_L(S)$ are the third roots of unity contained in $E$.
\end{prop}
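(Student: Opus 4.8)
The plan is to combine the description of monomial elements in Proposition~\ref{monomial_elements}(c) with a valuation analysis at the primes of $S$ and a finiteness argument for roots of unity. By Proposition~\ref{monomial_elements}(c), a monomial $g=lz^j\in\G(F)$ forces $j=0$, so $g=l$ with $l\in L^\times$ subject to the two norm equations $\n_{L/M}(l)=1=\n_{L/E}(l)$. The whole task is therefore to show that an $l\in\ring_L(S)$ satisfying these two equations is a third root of unity contained in $E$.

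First I would eliminate denominators at $S$. In terms of $\Gal(L/F)=\langle\rho\tau\rangle\cong C_6$, Property $A$ (inert in $E$, split in $L/E$, $\pp\nmid 2$) says precisely that $\tau|_E\neq\id$ while the square of the Frobenius is trivial on $\langle\rho\rangle$, i.e.\ the Frobenius of $\pp$ is the unique element of order two, $\tau$. Hence $\tau$ lies in the decomposition group of every prime $\mathfrak P$ of $L$ above $\pp$, so $\tau(\mathfrak P)=\mathfrak P$ and $v_{\mathfrak P}(\tau(l))=v_{\mathfrak P}(l)$. Applying $v_{\mathfrak P}$ to $\n_{L/M}(l)=l\,\tau(l)=1$ gives $2v_{\mathfrak P}(l)=0$, hence $v_{\mathfrak P}(l)=0$. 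Combined with the fact that $l\in\ring_L(S)$ is integral away from $S$, this shows $l\in\ring_L^\times$; the same holds for $l^{-1}=\tau(l)$.

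Next I would show $l$ is a root of unity. Since $E$ is imaginary quadratic over the totally real $F$, the field $L$ is a CM field with maximal totally real subfield $M=L^\tau$, and $\tau$ is complex conjugation at every archimedean place. Thus at each complex place $w$ of $L$ the relation $\n_{L/M}(l)=l\,\tau(l)=1$ reads $|l|_w^2=1$, so all archimedean absolute values of the algebraic integer $l$ equal one; by Kronecker's theorem $l$ is a root of unity in $L$.

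It remains to identify which roots of unity occur, and this is the main obstacle. Writing $\rho(l)=l^s$ on the finite cyclic group $\mu_L$ of roots of unity of $L$, the equation $\n_{L/E}(l)=l^{1+s+s^2}=1$ holds automatically together with $s^3\equiv 1$, so the norm relations alone do \emph{not} pin down $l$. The clean case is $\mu_L\subseteq E$: then $\rho$ fixes $l$, so $\n_{L/E}(l)=l^3=1$ and $l$ is a third root of unity in $E$, which is exactly the $\Gal(L/E)$-fixed case already recorded in Proposition~\ref{monomial_elements}(c). So everything reduces to excluding a root of unity $\zeta\in L\setminus E$; since the only subfields of the $C_6$-extension $L/F$ are $F,E,M,L$ and a non-real $\zeta$ can generate neither $E$ nor the totally real $M$, such a $\zeta$ would force $F(\zeta)=L$, making $L$ cyclotomic over $F$ with $\rho$ acting of order three. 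Here the arithmetic of $S$ must be brought to bear: reducing $l$ modulo a prime $\mathfrak P\mid\pp$ and using that $\tau$ acts as the $q$-power Frobenius on the residue field $\FF_{q^2}$ (where $q=N\pp$) turns $l\,\tau(l)=1$ into $\bar l^{\,q+1}=1$, so (away from the residue characteristic) the order of $l$ divides $q+1$; it is this local divisibility, together with the hypothesis restricting the roots of unity in $F_\pp$, that must be leveraged to drive the order down to $3$ and place $l$ in $E$. I expect this final identification to be the genuinely delicate step of the proof.
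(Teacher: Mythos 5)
Your reduction to $j=0$ via Proposition~\ref{monomial_elements}(c), the valuation argument at the primes of $S$ (the decomposition group of each $\mathfrak P\mid\pp$ is $\langle\tau\rangle$, so $v_{\mathfrak P}(l)+v_{\mathfrak P}(\tau(l))=2v_{\mathfrak P}(l)=0$), and the conclusion $l\in\ring_L^\times$ all match the paper's proof. Your next step diverges: you invoke Kronecker's theorem ($L$ is CM, $\tau$ is complex conjugation at every archimedean place, so $\n_{L/M}(l)=1$ forces all archimedean absolute values of the unit $l$ to equal $1$), whereas the paper instead writes $\ring_L^\times=\ring_E^\times\times\langle e_1,\dots,e_r\rangle$ with the $e_i$ fundamental units of the totally real field $M$, so that $l=\xi e_1^{s_1}\cdots e_r^{s_r}$ and $1=\n_{L/M}(l)=\xi\bar\xi\, e_1^{2s_1}\cdots e_r^{2s_r}$ kills all $s_i$ and leaves $\xi\in\ring_E^\times$; the determinant condition $\n_{L/E}(\xi)=\xi^3=1$ then finishes. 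Up to this point your route is sound and arguably cleaner.

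The genuine gap is the one you flag yourself: you do not show that the root of unity $l$ has order dividing $3$ and lies in $E$, and your proposed repair cannot work as described. Property~A contains no restriction on the roots of unity in $F_\pp$ --- that is Property~B, which enters only in Theorem~\ref{thm-no-o_E(S)-points} --- so the hypothesis you want to leverage is simply not available here; moreover the local congruence $\bar l^{\,q+1}=1$ at $\mathfrak P\mid\pp$ only bounds the order of $l$ by a divisor of $q+1$, which need not force order $3$. The paper closes this step by the unit-group decomposition above, which places every torsion unit of $\ring_L$ inside $\ring_E^\times$ and makes the conclusion immediate; in effect it assumes $\mu_L\subseteq E$. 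Your instinct that the case $\mu_L\not\subseteq E$ is the delicate one is well founded (for instance $L=\Q(\zeta_7)$, $E=\Q(\sqrt{-7})$ gives $\n_{L/E}(\zeta_7)=\zeta_7^{1+2+4}=1$ and $\n_{L/M}(\zeta_7)=1$, so $\zeta_7$ satisfies both norm equations without being a third root of unity in $E$), but neither your sketch nor the argument you would need to supply is present. As submitted, the proof is incomplete at exactly this final identification.
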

Here  $\ring_L(S)$ denotes $\ring_L(S)\:=\:\ring_E(S)\lambda_0+\ring_E(S)\lambda_1+\ring_E(S)\lambda_2$
for any  integral basis $\lambda_0,\lambda_1,\lambda_2$ of $\ring_L$.
\begin{proof}[Proof of Proposition~\ref{prop-S-monomials}]
 For $\pp\in S$ let $\pp_1,\pp_2,\pp_3$ be the prime ideals of $\ring_L$ above $\pp$, 
 so $\pp\ring_L=\pp_1\pp_2\pp_3\ring_L$. 
 Recall the field $M$ is the subfield of $L$ fixed by conjugation.
 As $\pp$ is unramified in $E$, the prime ideals $\pp_k$ are defined by their intersection with $M$, $\pp_k=(\pp_k\cap\ring_M)$. 
 For $l\in\ring_L(S)$, there are integers $r_k$ such that $v_{\pp_k}(l)=r_k$, $k=1,2,3$.
  Let $g=lz^j$ satisfy the unitary condition (\ref{unit1b}), 
 $1=c(j,b)\n_{L/M}(l)$ where $c(0,b)=1$, $c(1,b)=\rho(b)$, $c(2,b)=\rho(b)\rho^2(b)$ have $\pp_k$-valuation zero. But then $v_{\pp_k}(\n_{L/M}(l))=2r_k$ must be zero, so $r_1=r_2=r_3=0$.
Varying $\pp$ in $S$ it follows that $l\in\ring_L^\times$.
As $M/F$ is totally real, the unit group $\ring_M^\times$ is isomorphic to $\Z/2\Z\oplus\Z^r$ for some $r\geq 2$. Let $e_1,\dots,e_r$ be generators  of the non-torsion part.
Then $\ring_L^\times=\ring_E^\times\times <e_1,\dots,e_r>$. Accordingly, write $l=\xi e_1^{s_1}\cdots e_r^{s_r}$. Then the unitary condition is 
\begin{equation*}
 1\:=\:c(j,b)\xi\bar\xi  \cdot e_1^{2s_1}\cdots e_r^{2s_r}\:.
\end{equation*}
Especially, when $j=0$ this is satisfied only if $s_1=\dots=s_r=0$ and $1=\xi\bar\xi$. 
In case $g=l\in\G(F)$ the determinant condition $1=\n_{L/E}(\xi)=\xi^3$ implies $\xi$ is a third root of unity.
\end{proof}

\begin{thm}\label{thm-no-o_E(S)-points}
 For the division algebra of Theorem~\ref{thm-involutions} assume that for the structure constant $a\in\ring_E$ the involution is given
by some $b\in F$.
Let $S$ be a set of prime ideals $\pp$ of $F$ satisfying Property B and for which the valuations $v_\pp(b)$ are zero.
 Then, apart from the monomial solutions $g=lz^j$, where $l\in \ring_E^\times$ with $b^jl\bar l=1$,  
 there is no element in $\U(F)$ given by coordinates $(l_0,l_1,l_2)$ with $l_j\in\ring_E(S)$ 
 for $j=0,1,2$ in the cyclic presentation.
 Especially, the elements of $\G(F)^\rho$ of this kind are   the third roots of unity contained in $E$.
\end{thm}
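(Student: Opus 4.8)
The plan is to use that the hypothesis places all coordinates in $E$, so that $g$ lands in the commutative subfield generated by $z$. Since $\rho$ fixes $E$, the elements $l_0+l_1z+l_2z^2$ with $l_j\in E$ form the field $K:=E(z)\cong E[X]/(X^3-a)$, cubic over $E$ and sextic over $F$, on which $\alpha$ restricts to the order-two $F$-automorphism with $\alpha\restr{E}=\tau$ and $\alpha(z)=\frac ba z^2$; writing $\gamma=l_0+l_1z+l_2z^2$ and $K_0:=K^{\alpha}$ (a cubic field over $F$), the unitary condition becomes $\n_{K/K_0}(\gamma)=1$ and the determinant condition $\n_{K/E}(\gamma)=1$. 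First I would record that $b$ is automatically totally positive: by Theorem~\ref{thm-involutions}(ii), $b^3=\n_{M/F}(b)=\n_{E/F}(a)=a\bar a$ is totally positive, so $b>0$ at every archimedean place, and by Proposition~\ref{prop-definite} the form is totally definite. In particular $\alpha$ acts as complex conjugation at each archimedean place, so $|\gamma|_w=1$ at every archimedean $w$ because $\gamma\,\alpha(\gamma)=\n_{K/K_0}(\gamma)=1$.

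The heart of the argument is the local analysis at $\pp\in S$, whose aim is to rule out denominators. From $\n_{K/K_0}(\gamma)=1$ one gets $v_\mathcal{Q}(\gamma)=0$ for every prime $\mathcal{Q}$ of $K$ above $\pp$ whose restriction $\mathcal{Q}\cap K_0$ is non-split in $K/K_0$; and for $\pp$ inert in $E$, a prime of $K_0$ above $\pp$ splits in $K=K_0E$ precisely when its residue degree is even, i.e. when the minimal polynomial of a generator of $K_0$ acquires an irreducible quadratic factor over $F_\pp$. So the crux is the splitting type of $K_0$. Taking $t=z+\frac ba z^2\in K_0$ and using $b^3=a\bar a$, one finds
\begin{equation*}
 t^3-3b\,t-\trace\nolimits_{E/F}(a)=0,
\end{equation*}
so $t$ has minimal polynomial $P(X)=X^3-3bX-\trace\nolimits_{E/F}(a)$ over $F$, of discriminant
\begin{equation*}
 \Delta=27\bigl(4b^3-\trace\nolimits_{E/F}(a)^2\bigr)=-27(a-\bar a)^2=108\,d\,a_1^2
\end{equation*}
(writing $a=a_0+a_1\sqrt{-d}$), so $\Delta$ agrees with $3d$ up to a square in $F$. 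This is exactly where Property~B enters: inertia of $\pp$ in $E=F(\sqrt{-d})$ makes $-d$ a non-square in $F_\pp$, while the absence of sixth roots of unity makes $-3$ a non-square in $F_\pp$, so $3d=(-d)(-3)$, and hence $\Delta$, is a square in $F_\pp$. Therefore $P$ has no irreducible quadratic factor over $F_\pp$, no prime of $K_0$ above $\pp$ splits in $K$, and $v_\mathcal{Q}(\gamma)=0$ for all $\mathcal{Q}\mid\pp$.

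Combined with $S$-integrality of the coordinates, which makes $\gamma$ integral at every prime not above $S$, this shows $\gamma\in\ring_K$ is an algebraic integer that is in addition a unit at each $\pp\in S$. Since all its archimedean absolute values equal $1$, Kronecker's theorem forces $\gamma$ to be a root of unity, so the admissible $\gamma$ form a finite set. It then remains to feed these back into (\ref{unit1b}) and (\ref{unit2b}): a short computation, in the spirit of Proposition~\ref{monomial_elements}, shows that two of the $l_j$ must vanish, so $\gamma=lz^j$ is monomial with $l\in\ring_E^\times$ and $b^jl\bar l=1$. For the subgroup $\G(F)^\rho$ the extra condition $\n_{K/E}(\gamma)=l^3=1$ together with $l\bar l=1$ then forces $j=0$ and $l$ a third root of unity in $E$.

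I expect the genuine obstacle to be the local step: identifying Property~B with the anisotropy over $F_\pp$ of the norm-one torus of $K/K_0$, equivalently the absence of denominators at $\pp$. Everything hinges on the discriminant identity $\Delta\sim 3d=(-d)(-3)$ and on reading each half of Property~B as the statement that one of $-d$, $-3$ is a non-square, their product therefore being a square. The ramified-in-$E$ alternative of Property~B and the primes above $6$ require a separate but routine local check, and the concluding passage from ``root of unity'' to ``monomial'' should be carried out explicitly, since that is where the few surviving solutions are actually enumerated.
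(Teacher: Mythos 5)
Your route through the commutative subfield $K=E(z)$ is genuinely different from the paper's, which never leaves the coordinate level: there the unitary conditions (\ref{unit1b})--(\ref{unit2b}) are reduced modulo a uniformizer at $\pp\in S$, Lemmas~\ref{homogenous_solutions_modp} and~\ref{homogenous_solutions_mod3} show by explicit elimination that the resulting homogeneous system has only the trivial solution over the residue ring (this is precisely where Property~B enters, as the absence of sixth roots of unity in the residue field), so no denominator at $\pp$ can occur; integrality of the $l_j$ together with positivity of (\ref{unit1b}) at an archimedean place then forces monomiality directly. Your local step is attractive and the discriminant identity $\Delta\sim 3d$ is correct, but as stated it only covers the case $\pp$ inert and prime to $3$: when $\pp$ ramifies in $E$, the element $-d$ has odd valuation, so $\Delta\sim(-3)(-d)$ is then a \emph{non}-square in $F_\pp$ and the step ``product of two non-squares is a square'' fails. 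What you actually need is the weaker statement $F_\pp(\sqrt{\Delta})\neq E_\pp$, which follows uniformly (inert, ramified, and $\pp\mid 3$ alike) from $(-d)\cdot\Delta\sim-3\notin(F_\pp^\times)^2$, i.e.\ from the second half of Property~B alone; so this part is fixable.

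The serious gap is the endgame. After Kronecker you know only that $\gamma$ is a root of unity of $K$, and it is not true that roots of unity of $K$ lying in $\U(F)$ must be monomial: if $\zeta_3\in E$ the extension $K/E$ can be cyclotomic in a way incompatible with the Kummer generator $z$ --- for instance with $\sqrt{21}\in F$ and $E=F(\sqrt{-3})=F(\sqrt{-7})$ nothing prevents $K=E(\zeta_7)$, in which case $\alpha(\zeta_7)=\zeta_7^{-1}$ (since $\alpha$ is complex conjugation at the archimedean places), so $\n_{K/K_0}(\zeta_7)=1$ and $\zeta_7\in\U(F)$, while $\zeta_7^3\notin E$ shows $\zeta_7\neq lz^j$ for any $l\in E$. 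So ``a short computation shows two of the $l_j$ vanish'' is not available; what the theorem asserts about such elements is that their coordinates fail to lie in $\ring_E(S)$, and your argument gives no access to the coordinates at all: $\gamma\in\ring_K$ does not imply $l_j\in\ring_E$ at primes dividing $3a$ or the relative discriminant of $E[z]/\ring_E$, and Property~B places no restriction on $v_\pp(3)$ or $v_\pp(a)$. To close this you would have to return to a coordinate-level local analysis together with the archimedean positivity argument --- which is essentially what the paper's mod-$\pp$ lemmas and the integral case of its proof accomplish.
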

In the case of the coincidence of the maximal order of integer points $\ring_D$ with the cyclic order $\Lambda=\ring_L\oplus\ring_L z\oplus \ring_Lz^2$, Theorem~\ref{thm-no-o_E(S)-points}
implies the triviality of the $\ring_F(S)$-valued points of $\G^\rho$. We formulate this in the Kummer case, i.e. in case $E=F(\zeta_3)$.
\begin{cor}\label{cor-integral-case}
For the division algebra of Theorem~\ref{thm-involutions} assume  $E=F(\zeta_3)$, and assume the constants are $a\in\ring_E^\times$ and $b\in\ring_F^\times$.
Then for all sets $S$ of prime ideals of $F$ ramified or unramified and non-split in $E$ the group $\G(\ring_F(S))^\rho$ of $\ring_F(S)$-valued points is $\{1,\zeta_3,\bar\zeta_3\}$.
\end{cor}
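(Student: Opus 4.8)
The plan is to deduce this corollary directly from Theorem~\ref{thm-no-o_E(S)-points}: the substance is not a fresh computation but the translation between the $\ring_F(S)$-valued points of the abstract scheme $\G$ and the coordinate description $(l_0,l_1,l_2)$ used there, together with the observation that Property B is automatic once $E=F(\zeta_3)$.

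First I would exploit $a\in\ring_E^\times$. By Proposition~\ref{maximal-orders} this gives $\ring_D=\Lambda=\ring_L\oplus\ring_Lz\oplus\ring_Lz^2$, so, as discussed in Section~\ref{section-unitary-groups}, the integral points of $\G$ coincide with the $\ring_L$-valued points of the cyclic presentation. Extending scalars to $\ring_F(S)$, an element of $\G(\ring_F(S))$ is exactly a matrix $g(l_0,l_1,l_2)$ with coefficients $l_j\in\ring_L(S)$ satisfying (\ref{unit1b}), (\ref{unit2b}) and (\ref{det}). Passing to $\rho$-fixed points forces each $l_j$ into $L^\rho=E$, hence into $\ring_L(S)\cap E=\ring_E(S)$, so $\G(\ring_F(S))^\rho$ is precisely the set of such $g$ with $l_0,l_1,l_2\in\ring_E(S)$ --- the objects classified by Theorem~\ref{thm-no-o_E(S)-points}.

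Next I would check that the hypotheses of that theorem hold. Since $b\in\ring_F^\times\subseteq F$ the involution is given by an element of $F$, and being a unit it has $v_\pp(b)=0$ at every prime. For Property B, take $\pp\in S$ not dividing $2$; by hypothesis $\pp$ is ramified or inert, hence non-split, in $E=F(\zeta_3)$, so $E_\pp=F_\pp(\zeta_3)$ is a genuine field extension and $\zeta_3\notin F_\pp$. As $\zeta_6^2=\zeta_3$ and $-1\in F_\pp$, this yields $\zeta_6\notin F_\pp$, i.e. $F_\pp$ contains no primitive sixth root of unity, which is the remaining clause of Property B.

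Finally, Theorem~\ref{thm-no-o_E(S)-points} then identifies the elements of $\G(F)^\rho$ with coordinates in $\ring_E(S)$ as exactly the third roots of unity in $E$; for $E=F(\zeta_3)$ these are $\{1,\zeta_3,\bar\zeta_3\}$ with $\bar\zeta_3=\zeta_3^2$, giving the asserted equality. I expect the only points needing care to be the identification of the two notions of integral points through Proposition~\ref{maximal-orders} and the sixth-roots-of-unity clause of Property B (along with the tacit restriction to $\pp\nmid 2$); neither is a real obstacle, since all the arithmetic difficulty is already contained in Theorem~\ref{thm-no-o_E(S)-points}.
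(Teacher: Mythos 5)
Your proposal is correct and follows exactly the route the paper intends: the paper derives the corollary from Theorem~\ref{thm-no-o_E(S)-points} via the identification $\ring_D=\Lambda$ (Proposition~\ref{maximal-orders}, using $a\in\ring_E^\times$) and the observation that Property B holds automatically for non-split primes when $E=F(\zeta_3)$. Your explicit verification of the sixth-roots clause and your flagging of the tacit $\pp\nmid 2$ restriction are exactly the details the paper leaves implicit.
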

For the proof of Theorem~\ref{thm-no-o_E(S)-points} we need the following lemmas.
\begin{lem}\label{homogenous_solutions_modp}
Assume for a prime $p$ and an integer $n$ that $p^{n}\equiv 5\mod 6$.
Let $a\in \F_{p^{2n}}\setminus\F_{p^n}$ satisfy $a\bar a=b^3$ for some $b\in\F_{p^{n}}^\times$.
 Then the following system of homogeneous equations 
 \begin{equation}\label{Unit1modp}
l_0\bar l_0+bl_1\bar l_1+b^2 l_2\bar l_2 =0, 
\end{equation}
\begin{equation}\label{Unit2modp}
 a\bar l_0l_2+b\bar l_1l_0+b^2\bar l_2l_1=0,
\end{equation}
for $l_j\in \F_{p^{2n}}$, $j=0,1,2$,
only has the trivial solution $(l_0,l_1,l_2)=(0,0,0)$.
\end{lem}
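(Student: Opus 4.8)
The plan is to read the two homogeneous equations as orthogonality relations for a Hermitian form and then exploit the cyclic symmetry coming from multiplication by $z$. Write $q=p^n$, let $\bar x=x^q$ be the nontrivial automorphism of $\F_{p^{2n}}/\F_{p^n}$, and let $N(x)=x\bar x\in\F_{p^n}$ denote the norm. On $\F_{p^{2n}}^3$ I introduce the diagonal Hermitian form
\[
 \langle x,y\rangle\:=\:\bar x_0y_0+b\,\bar x_1y_1+b^2\,\bar x_2y_2,
\]
which is nondegenerate because $b\in\F_{p^n}^\times$, together with the linear map $\sigma(l_0,l_1,l_2)=(al_2,l_0,l_1)$, the reduction of multiplication by $z$ on the coordinates (which lie in the image of $E$, where $\rho$ acts trivially). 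With this notation equation (\ref{Unit1modp}) is exactly $\langle l,l\rangle=0$ and equation (\ref{Unit2modp}) is exactly $\langle l,\sigma l\rangle=0$.

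The first thing I would establish is that $\sigma$ is a \emph{similitude}: a direct computation using $N(a)=a\bar a=b^3$ gives $\langle\sigma x,\sigma y\rangle=b\,\langle x,y\rangle$, while $z^3=a$ gives $\sigma^3=a\cdot\id$. From these two facts I would deduce that the whole $\sigma$-orbit of $l$ is totally isotropic, even though only two relations are assumed. Indeed, applying the similitude identity to $\langle l,\sigma l\rangle=0$ yields $\langle\sigma l,\sigma^2l\rangle=0$ and then $\langle\sigma^2l,\sigma^3l\rangle=0$; since $\sigma^3l=a\,l$ this forces $\langle\sigma^2l,l\rangle=0$, hence $\langle l,\sigma^2l\rangle=0$. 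Together with $\langle l,l\rangle=\langle l,\sigma l\rangle=0$ and sesquilinearity, every pairwise product $\langle\sigma^il,\sigma^jl\rangle$ vanishes, so $W=\mathrm{span}\{l,\sigma l,\sigma^2l\}$ is totally isotropic; it is $\sigma$-stable because $\sigma^3l=a\,l\in W$.

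Now suppose $l\neq0$. A nondegenerate Hermitian form on a three-dimensional space has no totally isotropic subspace of dimension $\geq2$: if $\dim W\geq2$ then $W\subseteq W^\perp$ while $\dim W^\perp=3-\dim W<\dim W$, a contradiction. Hence $\dim W=1$, so $W=\F_{p^{2n}}l$ is a $\sigma$-eigenline, say $\sigma l=\lambda l$; then $\sigma^3=a\cdot\id$ gives $\lambda^3=a$ with $\lambda\in\F_{p^{2n}}$, and solving $\sigma l=\lambda l$ shows $l$ is proportional to $(\lambda^2,\lambda,1)$. Substituting this into $\langle l,l\rangle=0$ gives, with $s=N(\lambda)\in\F_{p^n}^\times$,
\[
 s^2+b\,s+b^2\:=\:0,
\]
so that $s/b$ is a root of $X^2+X+1$ in $\F_{p^n}$. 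But $p^n\equiv5\pmod6$ means $3\nmid p^n-1$, so $X^2+X+1$ is irreducible over $\F_{p^n}$ (its roots are the elements of order three, which $\F_{p^n}^\times$ lacks). This contradiction forces $l=0$.

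The step I expect to be the crux is the passage from the two given relations to the total isotropy of the full orbit span $W$: it is precisely the similitude identity $\langle\sigma x,\sigma y\rangle=b\,\langle x,y\rangle$ together with $\sigma^3=a\cdot\id$ (which encode the structure constants $N(a)=b^3$ and $z^3=a$) that supplies the third, unstated orthogonality and collapses $W$ to an eigenline. After that, the whole arithmetic hypothesis enters only through the irreducibility of $X^2+X+1$ over $\F_{p^n}$, i.e. through $p^n\equiv2\pmod3$; it is worth noting that the assumption $a\notin\F_{p^n}$ is in fact not needed for this conclusion, the decisive obstruction being simply the absence of primitive cube roots of unity in $\F_{p^n}$.
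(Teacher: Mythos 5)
Your proof is correct, but it takes a genuinely different route from the paper's. The paper argues by brute force: after ruling out vanishing coordinates it normalizes $l_2=1$, solves the resulting $2\times 2$ linear system for $l_1,\bar l_1$ via the adjugate, and splits into the cases $b^2-l_0\bar l_0\neq 0$ and $l_0\bar l_0=b^2$, each time reducing to the equation $l_0^3=a^2$ and deriving a contradiction from the absence of primitive third roots of unity in $\F_{p^n}^\times$. You instead package the two equations as $\langle l,l\rangle=\langle l,\sigma l\rangle=0$ for the diagonal Hermitian form and the similitude $\sigma$ with multiplier $b$ satisfying $\sigma^3=a\cdot\id$ (I checked: $\langle\sigma x,\sigma y\rangle=a\bar a\,\bar x_2y_2+b\bar x_0y_0+b^2\bar x_1y_1=b\langle x,y\rangle$ indeed uses exactly $a\bar a=b^3$), deduce that the orbit span $W$ is totally isotropic, invoke the Witt bound $\dim W\le 1$ for a nondegenerate form in dimension three, and land on the eigenline $(\lambda^2,\lambda,1)$ with $\lambda^3=a$, where the same arithmetic obstruction ($X^2+X+1$ irreducible over $\F_{p^n}$ since $3\nmid p^n-1$ and $p\neq 3$) finishes the job. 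Your version buys conceptual clarity — it explains the two equations as the first two of three orthogonality relations $\langle l,\sigma^j l\rangle=0$, with the similitude identity supplying the missing third — and it makes transparent, as you note, that the hypothesis $a\notin\F_{p^n}$ is never used; the paper's version is self-contained at the level of high-school algebra and mirrors the computation it reuses verbatim in Lemma~\ref{homogenous_solutions_mod3} for the ramified case, where your form-theoretic argument would need to be adapted to the non-reduced ring $\F_{p^n}[\pi]/(\pi^2)$.
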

\begin{proof}[Proof of Lemma~\ref{homogenous_solutions_modp}]
First notice that the condition $p^{n}\equiv 5\mod 6$ is satisfied only for primes $p\equiv 5\mod 6 $ and odd $n$. 
Equivalently, there exists no primitive sixth root of unity in the finite field $\F_{p^n}$.
 Next, if one of the $l_j$ is zero, then by (\ref{Unit2modp}) another one is zero. But then by (\ref{Unit1modp}), the remaining one must be zero, too.
 So for a non-trivial solution we have $l_0l_1l_2\not=0$. As the equations are homogeneous, we may assume  without loss of generality $l_2=1$.
 Then equations (\ref{Unit1modp}) and (\ref{Unit2modp}) simplify to
 \begin{equation}\label{Unit1modp1}
  l_0\bar l_0+bl_1\bar l_1+ b^2 =0,
 \end{equation}
\begin{equation}\label{Unit2modp1}
 a\bar l_0+b\bar l_1l_0+b^2l_1=0.
\end{equation}
From (\ref{Unit2modp1}) and its conjugate we get the following system of linear equations for $l_1,\bar l_1$,
\begin{equation*}
 \begin{pmatrix}
  b^2&bl_0\\b\bar l_0&b^2
 \end{pmatrix}
\begin{pmatrix}
  l_1\\\bar l_1 
 \end{pmatrix}
=\begin{pmatrix}
  -a\bar l_0\\-\bar a l_0 
 \end{pmatrix}.
\end{equation*}
Multiplying by the adjunct 
$\begin{pmatrix}
  b^2&-bl_0\\-b\bar l_0&b^2
 \end{pmatrix}$
of the matrix involved we get
\begin{equation*}
 b^2(b^2-l_0\bar l_0)\begin{pmatrix}
                 l_1\\\bar l_1 
                \end{pmatrix}
=\begin{pmatrix}
  -ab^2\bar l_0+\bar abl_0^2\\ab\bar l_0^2-\bar ab^2l_0
 \end{pmatrix}.
\end{equation*}
There are two possibilities. First, assume $b^2-l_0\bar l_0\not=0$.
Then the linear equation has the solution $l_1=\frac{\bar abl_0^2-ab^2\bar l_0}{b^2(b^2-l_0\bar l_0)}$.
Inserting
\begin{equation*}
 l_1\bar l_1 =\frac{a\bar ab^2(l_0\bar l_0)^2-a^2b^3\bar l_0^3-\bar a^2b^3l_0^3+a\bar ab^4l_0\bar l_0}{b^4(b^2-l_0\bar l_0)^2}
\end{equation*}
into (\ref{Unit1modp1}) yields
\begin{equation*}
 0=(b^3-a\overline{ a^{-1} l_0^3})(b^3-\bar a a^{-1} l_0^3),
\end{equation*}
which is equivalent to $l_0^3=a^2$ . 
It follows $(l_0\bar l_0)^3=(a\bar a)^2=b^6$, which is equivalent to $l_0\bar l_0=b^2$ as there is no primitive third root of unity in $\F_{p^n}^\times$. 
This is a contradiction to the assumption $b^2-l_0\bar l_0\not=0$.

Second, assume $l_0\bar l_0=b^2$, i.e. $\bar l_0=b^2l_0^{-1}$. In this case the linear system above forces $\bar abl_0^2=ab^2\bar l_0$, i.e. $l_0^3=a^2$. 
Inserting $l_0$ into (\ref{Unit1modp1}) yields $l_1\bar l_1=-2b$, and multiplying the original equation (\ref{Unit2modp1}) by $\bar al_0$ yields
\begin{equation*}
 a\bar l_0\bar l_1+\bar al_0l_1=-b^3.
\end{equation*}
These two equations imply
\begin{equation*}
 (l_0l_1-a)(\overline{l_0l_1}-\bar a)=(l_0\bar l_0)(l_1\bar l_1)-(a\overline{l_0l_1}+ \bar al_0l_1)+a\bar a=-2b^3+b^3+b^3=0.
\end{equation*}
Equivalently, $l_0l_1=a$.  But if so, $l_1\bar l_1=\frac{a\bar a}{l_0\bar l_0}=b$ in contradiction to $l_1\bar l_1=-2b$.
So in neither case we get a non-trivial solution satisfying the conditions of Proposition~\ref{homogenous_solutions_modp}.
\end{proof}
By a rather  similar argument we get the following.
\begin{lem}\label{homogenous_solutions_mod3}
Let $p$ be an odd prime and such that for some $n$ the finite field $\F_{p^n}$ does not contain the third roots of unity.
On the residue class ring $R:=\F_{p^n}[\pi]/(\pi^2)$ let $\:\bar{}\:$ denote the conjugation given by $\bar\pi=-\pi$.
 Let $a\in R^\times\setminus\F_{p^{n}}^\times$ and $b\in\F_{p^{n}}^\times$  satisfy the relation $a\bar a=b^3$.
 Then the following system of  equations  
  \begin{equation}\label{Unit1mod3}
l_0\bar l_0+bl_1\bar l_1+b^2 l_2\bar l_2 =0, 
\end{equation}
\begin{equation}\label{Unit2mod3}
 a\bar l_0l_2+b\bar l_1l_0+b^2\bar l_2l_1=0,
\end{equation}
for $l_0,l_1,l_2\in R $,
only has the solutions $(l_0,l_1,l_2)\equiv (0,0,0) \mod \pi$.
\end{lem}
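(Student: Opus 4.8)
The plan is to exploit the very simple structure of the dual--number ring $R=\F_{p^n}[\pi]/(\pi^2)$. Writing each coordinate as $l_j=x_j+y_j\pi$ with $x_j,y_j\in\F_{p^n}$, the assertion to be proved, $(l_0,l_1,l_2)\equiv(0,0,0)\bmod\pi$, is exactly the statement that $x_0=x_1=x_2=0$. The decisive observation is that the conjugation $\bar\pi=-\pi$ has fixed ring $\F_{p^n}$ and satisfies $l_j\bar l_j=x_j^2\in\F_{p^n}$, since the remaining term carries a factor $\pi^2=0$. Writing $a=\alpha+\beta\pi$ with $\alpha\in\F_{p^n}^\times$, the relation $a\bar a=b^3$ becomes simply $\alpha^2=b^3$.

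First I would extract the $\F_{p^n}$-components of (\ref{Unit1mod3}) and (\ref{Unit2mod3}). Since each $l_j\bar l_j$ already lies in $\F_{p^n}$, equation (\ref{Unit1mod3}) collapses to the single $\F_{p^n}$-equation
\begin{equation*}
 x_0^2+bx_1^2+b^2x_2^2=0,
\end{equation*}
and the $\F_{p^n}$-part of (\ref{Unit2mod3}) reads
\begin{equation*}
 \alpha x_0x_2+bx_0x_1+b^2x_1x_2=0.
\end{equation*}
The $\pi$-parts impose conditions on the $y_j$ which I will not need: these two displayed equations alone, over the honest field $\F_{p^n}$, should already force $x_0=x_1=x_2=0$.

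This reduces the problem to the analogue of Lemma~\ref{homogenous_solutions_modp}, but now over $\F_{p^n}$ with the trivial involution. Here the main obstacle is that the linear-algebra device of the previous proof (reading off $l_1$ and $\bar l_1$ from a $2\times2$ system) degenerates, since conjugation on $\F_{p^n}$ is the identity; so I would argue by direct elimination instead. A short case check shows that if any $x_j$ vanishes then all do. Assuming $x_0x_1x_2\neq0$ and rescaling to $x_2=1$, the second equation gives $x_0=-b^2x_1/(\alpha+bx_1)$ (the denominator cannot vanish, else $x_1=0$); substituting into the first, clearing denominators, and using $\alpha^2=b^3$, I expect the resulting quartic in $x_1$ to factor as a perfect square
\begin{equation*}
 \bigl(bx_1^2+\alpha x_1+b^2\bigr)^2=0.
\end{equation*}

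It then remains to analyse $bx_1^2+\alpha x_1+b^2=0$. Using $\alpha^2=b^3$ and $\zeta_3^2+\zeta_3+1=0$ one verifies directly that its two roots are $x_1=\tfrac{\alpha}{b}\zeta_3$ and $x_1=\tfrac{\alpha}{b}\zeta_3^2$, where $\zeta_3$ denotes a primitive third root of unity. Since $\alpha/b\in\F_{p^n}^\times$, such a root lies in $\F_{p^n}$ only if $\zeta_3\in\F_{p^n}$, which is excluded by hypothesis (in particular this forces $p\neq3$, consistent with the requirement that $\F_{p^n}$ miss the primitive third roots of unity). This contradiction completes the argument. The only genuinely computational step is the perfect-square factorisation, while the conceptual heart is the identification of the roots of the quadratic with $\tfrac{\alpha}{b}$ times a primitive cube root of unity.
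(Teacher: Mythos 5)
Your reduction to the residue field and the elimination that follows are correct as far as they go, and they take a genuinely different route from the paper: the paper stays inside $R$, solves a $2\times 2$ linear system for $(l_1,\bar l_1)$ and splits into cases according to whether $b^2-l_0\bar l_0$ is a unit, whereas you collapse everything onto the $\F_{p^n}$-components and finish with the perfect-square identity. That identity and the identification of the roots of $bx_1^2+\alpha x_1+b^2$ as $\tfrac{\alpha}{b}\zeta_3$ and $\tfrac{\alpha}{b}\zeta_3^2$ both check out. The genuine gap is the case of residue characteristic $3$, which you dismiss in the parenthetical ``this forces $p\neq 3$''. It does not: the hypothesis is only that $\F_{p^n}$ contain no \emph{primitive} third root of unity (this is exactly how the paper's proof invokes it), and that holds automatically when $p=3$. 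Moreover the lemma is applied in the proof of Theorem~\ref{thm-no-o_E(S)-points} to ramified primes, which in the Kummer case $E=F(\zeta_3)$ are precisely the primes above $3$; so $p=3$ is not excluded --- it is the case the lemma is chiefly needed for.

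In characteristic $3$ your argument fails at two places at once. First, $\zeta^2+\zeta+1=(\zeta-1)^2$ there, so the (double) root of $bx_1^2+\alpha x_1+b^2$ is $x_1=\alpha/b\in\F_{p^n}$ and no contradiction arises. Second, and more fundamentally, your two residue equations really do admit the nontrivial solution $(x_0,x_1,x_2)=(b,\alpha/b,1)$: indeed $b^2+\alpha^2/b+b^2=3b^2=0$ and $\alpha b+\alpha b+\alpha b=0$. Hence the $\pi$-components of (\ref{Unit2mod3}), which you explicitly discard, are indispensable. The repair is short. Writing $a=\alpha+\beta\pi$ (so $\beta\neq 0$ because $a\notin\F_{p^n}^\times$), the $\pi$-component of (\ref{Unit2mod3}) is
\begin{equation*}
\alpha(x_0y_2-y_0x_2)+\beta x_0x_2+b(x_1y_0-y_1x_0)+b^2(x_2y_1-y_2x_1)\:=\:0\:.
\end{equation*}
Your elimination still shows that in characteristic $3$ any solution with $x_0x_1x_2\neq0$ is a scalar multiple of $(b,\alpha/b,1)$; on such a triple the coefficients of $y_0$, $y_1$, $y_2$ above all vanish (e.g.\ $-\alpha x_2+bx_1=0$), leaving $\beta x_0x_2\neq 0$, a contradiction. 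You should add this step; with it your proof is complete, and in fact cleaner than the paper's, whose own concluding contradiction in its second case ($l_1\bar l_1\equiv b$ versus $l_1\bar l_1\equiv -2b$, i.e.\ $3b\equiv 0$) is likewise vacuous in characteristic $3$ and needs the same kind of supplement.
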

\begin{proof}[Proof of Lemma~\ref{homogenous_solutions_mod3}]
If one of the $l_j$ is zero modulo $\pi$, then by (\ref{Unit2mod3}) another one is zero modulo $\pi$. But then by (\ref{Unit1mod3}), the remaining one must be zero modulo $\pi$, too.
 So for a solution not contained in $\pi R$ we have $l_0l_1l_2\not\equiv 0 \mod \pi$. As the equations are homogeneous, we may assume  without loss of generality $l_2=1$.
 Then equations (\ref{Unit1modp}) and (\ref{Unit2modp}) simplify to
 \begin{equation}\label{Unit1mod31}
  l_0\bar l_0+bl_1\bar l_1+ b^2 =0,
 \end{equation}
\begin{equation}\label{Unit2mod31}
 a\bar l_0+b\bar l_1l_0+b^2l_1=0.
\end{equation}
From (\ref{Unit2modp1}) and its conjugate we get the following system of linear equations for $l_1,\bar l_1$,
\begin{equation*}
 \begin{pmatrix}
  b^2&bl_0\\b\bar l_0&b^2
 \end{pmatrix}
\begin{pmatrix}
  l_1\\\bar l_1 
 \end{pmatrix}
=\begin{pmatrix}
  -a\bar l_0\\-\bar a l_0 
 \end{pmatrix}.
\end{equation*}
Multiplying by the adjunct 
$\begin{pmatrix}
  b^2&-bl_0\\-b\bar l_0&b^2
 \end{pmatrix}$
of the matrix involved we get
\begin{equation*}
 b^2(b^2-l_0\bar l_0)\begin{pmatrix}
                 l_1\\\bar l_1 
                \end{pmatrix}
=\begin{pmatrix}
  -ab^2\bar l_0+\bar abl_0^2\\ab\bar l_0^2-\bar ab^2l_0
 \end{pmatrix}.
\end{equation*}
There are two possibilities. First, assume $b^2-l_0\bar l_0\not\equiv 0 \mod \pi$.
Then the linear equation has the solution $l_1=\frac{\bar abl_0^2-ab^2\bar l_0}{b^2(b^2-l_0\bar l_0)}$.
Inserting
\begin{equation*}
 l_1\bar l_1 =\frac{a\bar ab^2(l_0\bar l_0)^2-a^2b^3\bar l_0^3-\bar a^2b^3l_0^3+a\bar ab^4l_0\bar l_0}{b^4(b^2-l_0\bar l_0)^2}
\end{equation*}
into (\ref{Unit1mod31}) yields
\begin{equation*}
 0=(b^3-a\overline{ a^{-1} l_0^3})(b^3-\bar a a^{-1} l_0^3)\:.
\end{equation*}
From this we get $l_0^3\equiv a^2 \mod \pi$. It follows $(l_0\bar l_0)^3\equiv (a\bar a)^2\equiv b^6 \mod \pi$. But $\F_{p^n}$ does not contain non-trivial third roots of unity, 
so $l_0\bar l_0\equiv b^2 \mod \pi$, a contradiction.

Second, assume $l_0\bar l_0\equiv b^2 \mod \pi$. In this case the linear system above forces $\bar abl_0^2\equiv ab^2\bar l_0 \mod \pi$, i.e. $l_0\equiv b \mod \pi$. 
Inserting $l_0$ into (\ref{Unit1mod31}) yields $l_1\bar l_1\equiv -2b \mod\pi$, 
and multiplying the original equation (\ref{Unit2mod31}) by $\bar al_0$ yields
\begin{equation*}
 a\bar l_0\bar l_1+\bar al_0l_1=-b^3.
\end{equation*}
These two equations imply
\begin{equation*}
 (l_0l_1-a)(\overline{l_0l_1}-\bar a)=(l_0\bar l_0)(l_1\bar l_1)-(a\overline{l_0l_1}+ \bar al_0l_1)+a\bar a\equiv -2b^3+b^3+b^3\equiv 0\mod\pi.
\end{equation*}
So $l_0l_1\equiv a\mod\pi$, and $l_0l_1\bar l_0\bar l_1\equiv a\bar a\mod\pi$. But this implies $l_1\bar l_1\equiv b\mod\pi$, in contradiction to $l_1\bar l_1\equiv -2b \mod\pi$.
So in neither case we get a  solution non-trivial modulo $\pi$ satisfying the conditions of Proposition~\ref{homogenous_solutions_mod3}.
\end{proof}
\begin{proof}[Proof of Theorem~\ref{thm-no-o_E(S)-points}]
We notice that Property B for a prime ideal $\pp$  is equivalent to the condition that the \'etale extension $E_\pp/F_\pp$ is 
a field extension, and 
such that the residue class field $\kappa_{F_\pp}$ does not  contain a primitive sixth root of unity. So $\kappa_{F_\pp}\cong \F_{p^{n}}$, and either $p=3$ or  $p^n\equiv 5\mod 6$.

  Assume $(l_0,l_1,l_2)$ gives rise to an element $g$ of $\U(F)$ with $l_j\in \ring_E(S)$, $j=0,1,2$. 
  If actually each $l_j\in \ring_E$,  then all the summands of condition (\ref{unit1b}) are non-negative.
  For a suitably chosen embedding of $F$ into $\RR$, we have $l_0\bar l_0=0$ or $l_0\bar l_0=1$.
  So either $l_0=0$ or $l_1=l_2=0$. By (\ref{unit2b}), in the first case one of $l_1,l_2$ is zero, too.
  So in order to satisfy condition (\ref{unit1b}), $g$ must be monomial, $g=lz^j$ for an element $l\in\ring_E^\times$ of norm one. 
  Applying Proposition~\ref{monomial_elements}, we only get the trivial solutions $(l_0,l_1,l_2)=(\zeta_3^{k},0,0)$ in $\G(F)$.
  
  For a non-trivial solution with $l_j\in\ring_E(S)$, $j=0,1,2$, and $l_j\notin\ring_E$ for at least one $j$,
  there exists a prime ideal $\pp\in S$ and an integer $r>0$ such that $\pp^rl_j$ belongs to the ring $\ring_{E_\pp}$ of
  $\pp$-adic integers, $j=0,1,2$. We assume $r$ to be chosen minimal with this Property.
  Let $\pi$ be a uniformizing element of the prime ideal in $\ring_{E_\pp}$.
 We get a tuple $(l_0',l_1',l_2')=\pi^r(l_0,l_1,l_2)$ which satisfies the two unitary conditions (\ref{unit1b}), (\ref{unit2b}) for $(l_0,l_1,l_2)$ 
 and leads to the two homogeneous conditions (\ref{Unit1modp}) and (\ref{Unit2modp}) 
 of Lemma~\ref{homogenous_solutions_modp} for $(l_0',l_1',l_2')$ modulo $(\pi)$, in case $E_\pp/F_\pp$ is unramified.
 Respectively, if $E_\pp/F_\pp$ is ramified, $(l_0',l_1',l_2')$ modulo $(\pi^2)$ satisfy the conditions (\ref{Unit1mod3}) and (\ref{Unit2mod3}) of Lemma~\ref{homogenous_solutions_mod3}.
 As $b\in\ring_{F_\pp}^\times$ by assumption,  Lemmas~\ref{homogenous_solutions_modp} and \ref{homogenous_solutions_mod3} apply.
 So $(l_0',l_1',l_2')$ must be zero modulo $(\pi)$.  This contradicts the minimal choice of $r$. 
\end{proof}

The following restriction for the denominators of $\U(F)^\rho$ is a consequence of the Proof of Theorem~\ref{thm-no-o_E(S)-points}.
\begin{cor}\label{denominators}
 For the division algebra of Theorem~\ref{thm-involutions} assume   $b\in F$.
 Let $g(l_0,l_1,l_2)$ be an element of $\U(F)^\rho$. 
 Then the denominators of $l_j\in E$ are not contained in prime ideals $\pp$ of $E$ lying over prime ideals of $F$ satisfying Property B such that $v_\pp(b)=0$.
\end{cor}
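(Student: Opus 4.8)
The plan is to extract the purely local reduction argument already carried out in the proof of Theorem~\ref{thm-no-o_E(S)-points} and to run it one prime at a time, without any global $S$-integrality hypothesis. Since $g\in\U(F)^\rho$ is fixed by $\rho$, its coefficients satisfy $\rho(l_j)=l_j$, hence $l_j\in E$; and because $b\in F$ we have $\rho(b)=\rho^2(b)=b$. Under these specializations the two unitary conditions (\ref{unit1b}) and (\ref{unit2b}) collapse to
\[
 l_0\bar l_0+bl_1\bar l_1+b^2l_2\bar l_2=1,\qquad a\bar l_0l_2+b\bar l_1l_0+b^2\bar l_2l_1=0,
\]
which are precisely the inhomogeneous analogues of the systems treated in Lemmas~\ref{homogenous_solutions_modp} and~\ref{homogenous_solutions_mod3}. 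As recorded in the proof of Theorem~\ref{thm-no-o_E(S)-points}, Property B for the prime $\mathfrak q=\pp\cap\ring_F$ lying below $\pp$ means exactly that $E_\pp/F_\pp$ is a field extension---unramified in the inert case, ramified otherwise---whose residue field is an $\F_{p^n}$ carrying no primitive sixth root of unity, so that either $p=3$ or $p^n\equiv 5\bmod 6$.

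First I would argue by contradiction. Suppose some coefficient $l_j$ fails to be $\pp$-integral, and set $r=-\min_j v_\pp(l_j)>0$. Choosing a uniformizer $\pi$ of $\ring_{E_\pp}$ and putting $(l_0',l_1',l_2')=\pi^r(l_0,l_1,l_2)$, the $l_j'$ are $\pp$-integral and, by minimality of $r$, not all divisible by $\pi$. Both conditions above are homogeneous of degree two in the coefficients, so after this scaling the right-hand side $1$ is multiplied by a positive power of $\pi\bar\pi$; reducing modulo $\pi$ in the inert case, respectively modulo $\pi^2$ in the ramified case, therefore kills the constant and returns exactly the homogeneous systems (\ref{Unit1modp})--(\ref{Unit2modp}), respectively (\ref{Unit1mod3})--(\ref{Unit2mod3}). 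Lemma~\ref{homogenous_solutions_modp}, respectively Lemma~\ref{homogenous_solutions_mod3}, then forces $(l_0',l_1',l_2')\equiv(0,0,0)$ modulo $\pi$, contradicting the choice of $r$. Hence every $l_j$ is $\pp$-integral, which is the assertion. Note that only the unitary conditions enter, so the argument applies to all of $\U(F)^\rho$ and needs no determinant condition.

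Since this is the very local statement already established en route to Theorem~\ref{thm-no-o_E(S)-points}, I do not expect a genuinely new obstacle; the step that will cost the most care is checking that the hypotheses of the two lemmas really hold after reduction at $\pp$. Concretely, one must verify that $a$ is a $\pp$-unit---which follows from $a\bar a=\n_{E/F}(a)=\n_{M/F}(b)=b^3$ (using $b\in F$) together with $v_\pp(b)=0$ and the fact that conjugation fixes $\pp$---and that the relation $a\bar a=b^3$ descends to the residue field $\F_{p^{2n}}$, respectively to the ring $R=\F_{p^n}[\pi]/(\pi^2)$, with conjugation reducing to the nontrivial automorphism of $\F_{p^{2n}}/\F_{p^n}$, respectively to $\bar\pi=-\pi$ on $R$, and with the reduced $a$ landing in the quadratic part $\F_{p^{2n}}\setminus\F_{p^n}$, respectively in $R^\times\setminus\F_{p^n}^\times$. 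The no-sixth-root-of-unity clause built into Property B is then exactly the remaining arithmetic input (absence of primitive third roots of unity) required by the two lemmas. Once these local hypotheses are confirmed, the conclusion is verbatim that of Theorem~\ref{thm-no-o_E(S)-points}, and the corollary follows at once.
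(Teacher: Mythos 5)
Your proposal is correct and follows essentially the same route as the paper's own proof: localize at $\pp$, clear denominators by a minimal power $\pi^r$ of a uniformizer, reduce modulo $(\pi)$ (inert case) or $(\pi^2)$ (ramified case) to obtain the homogeneous systems of Lemmas~\ref{homogenous_solutions_modp} and~\ref{homogenous_solutions_mod3}, and contradict the minimality of $r$. Your extra care in verifying that $a$ reduces to a unit via $a\bar a=b^3$ and that the residue-level hypotheses of the two lemmas are met is a welcome addition that the paper leaves implicit.
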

\begin{proof}[Proof of Corollary~\ref{denominators}]
Let $\pp$ be a prime satisfying the conditions above and consider $g(l_0,l_1,l_2)$ as an element of $\U(F_\pp)^\rho$.
If $g(l_0,l_1,l_2)\notin\U(\ring_{F_\pp})^\rho$, then there is an integer $r>0$ (again chosen minimally) such that $\pi^r(l_0,l_1,l_2)=(l_0',l_1',l_2')$ 
satisfies (\ref{Unit1modp}) and (\ref{Unit2modp}) modulo $(\pi)$ (respectively, (\ref{Unit1mod3}) and (\ref{Unit2mod3}) modulo $(\pi^2)$), 
and by Lemma~\ref{homogenous_solutions_modp} (respectively, Lemma~ \ref{homogenous_solutions_mod3}),  $(l_0',l_1',l_2')\equiv (0,0,0)$ modulo $\pi$, which contracts the minimal choice of $r$.
\end{proof}

The notion of $\ring_F(S)$-valued points becomes relevant when there is a $\ring_F$-structure on the special unitary group $\G$.
Then the group $\G(\ring_F(S))$ is an arithmetic subgroup of $\G(F_\pp)$. 
Especially, if $\G(F_v)$ is compact for some archimedean place $v$, the quotient will be cocompact (\cite{borel}, \cite{borel-harder}).
Any explicit description of $\G(\ring_F(S))$, or of some congruence subgroup 
allows to deduce properties of the quotient $\G(\ring_F(S))\backslash \G(F)$. 
But the following example gives evidence that by choosing a cyclic presentation, i.e.  controlling  the involution $\alpha$, 
even the explicit detection of non-trivial elements of $\G(\ring_F(S))$ is sophisticated.
\begin{example}\label{example-S-arithmetic}
 For simplicity we assume the ground field $F$ to equal the rationals $\Q$, but this example  generalizes to arbitrary totally real ground fields, when  the elements 
 of finite order in $D$ can be controlled.
 Let $E=\Q(\sqrt{-3})=\Q(\zeta_3)$, where $\zeta_3=\frac{1}{2}(-1+\sqrt{-3})$. Let $M/\Q$ be a totally real $C_3$-Galois extension such that $L=EM$ be $C_6$-Galois over $\Q$. 
 We assume that the norm of ${L/E}$ is  not surjective on $\ring_E^\times$ (that is, $L$ does not contain the ninth roots of unity).
 For example, one could generate $M$ by the polynomial $f(X)=X^3-13X+13$.
 Choose the structure constant $a=\zeta_3^j$ of the division algebra $D$ in Theorem~\ref{thm-involutions} to be a power of $\zeta_3$,  where $j\not\equiv 0\mod 3$. Then $a\notin\n_{L/M}$
 and $D$ is indeed a division algebra. As $a\bar a=1$, we choose the involution constant $b=1$.
 So the involution  (\ref{matrix-involution}) written with respect to the cyclic presentation is
 \begin{equation*}
 \alpha\::\: 
 \begin{pmatrix}  l_0&l_1&l_2\\a\rho(l_2)&\rho(l_0)&\rho(l_1)\\a\rho^2(l_1)&a\rho^2(l_2)&\rho^2(l_0)\end{pmatrix}\:
 \mapsto\: 
 \begin{pmatrix}
  \bar l_0&\bar a\rho(\bar l_2)&\bar a\rho^2(\bar l_1)\\
  \bar l_1&\rho(\bar l_0)&\bar a\rho^2(\bar l_2)\\
  \bar l_2&\rho(\bar l_1)& \rho^2(\bar l_0)
 \end{pmatrix}\:.
\end{equation*}
The unitary groups $\U$ and $\G$ are defined over $\Z$, and the different notions of integer valued points coincide.
At infinity the involution $\alpha:M_3(\C)\to M_3(\C)$ is simply given by the conjugate transpose, $\alpha(g)=\bar g'$. 
Especially, $\U(\R)$ and $\G(\R)$ are compact (see Proposition~\ref{prop-definite}).

 The subfield $E(z)$ of $D$ is isomorphic to $\Q(\zeta_9)$. Theorem~\ref{thm-no-o_E(S)-points}, Corollary~\ref{cor-integral-case} and Corollary~\ref{denominators}
 say that for elements $l_0+l_1z+l_2z^2\in E(z)$
 to belong to $\U(\Q)$ it is necessary that the primes $p$ occurring in the denominators of $l_0,l_1,l_2$ are  split in $E$.
 Especially, for a set $S$ of primes $p\equiv 5\mod 6$, the subgroup $\G(\Z(S))^\rho$  is given by the third roots of unity $1,\zeta_3,\zeta_3^{2}$ in $E$.
 Notice that $\G(\Z(S))^\rho$ is the intersection of $\G(\Z(S))$ with $E(z)^\times$
 Additionally, assume that the primes of $S$ actually satisfy Property A, that is $L_p$ is a split algebra.
 Then by Proposition~\ref{prop-S-monomials}, the intersection of $\G(\Z(S))$ with $L$ is $\{1,\zeta_3,\zeta_3^{2}\}$, too:
 
 \textit{In $S$-arithmetic subgroups ($S$ satisfying Property A), the elements belonging to the two obvious subfields $L$ and $E(z)$ of $D$ are the trivial ones contained in $\ring_E^\times$.}
 
 The meaning of Property A is the following. Let $p$ be a prime such that $E_p$ is non-split and $L_p$ is split. So $D_p$ is split, 
 and the embedding~(\ref{embedding}) identifies $D_p=D\otimes_\ZZ \Q_p$ with $M_3(E_p)$
 by the isomorphism $L_p\cong E_p\oplus E_p\oplus E_p$ given by the three embeddings $\rho^j$ of $L$ into $E_p$.
 Then $\G(\Q_p)$ is isomorphic to $\SU_3(E_p)$, the up to equivalence unique special unitary group over $E_p/F_p$ of degree three (see~\cite[1.9]{rogawski}).
 And the  isomorphism is given by the above embedding. Then for $S=\{(p)\}$ the group $\G(\Z(S))=\G(\Z[\frac{1}{p}])$ is an arithmetic subgroup of $\SU_3(E_p)$.
 As $\G(\R)$ is compact, $\G(\Z(S))$ is cocompact in $\SU_3(E_p)$  (see \cite{borel}, \cite{borel-harder}).
 For the resulting quotient it is natural to  study the action of the arithmetic subgroup on the affine Bruhat-Tits tree, the quotients becoming finite graphs. 
  In case $p$ ramified, this is the $(p+1)$-regular $\SL_3$-tree, in case $p$ is unramified, we get a $(p+1,p^3+1)$-bi-regular tree.
  In view of Lemma~\ref{lemma-torsion-freeness},  the  finite quotient graphs modulo $\G_0\cap\G(\Z[\frac{1}{p}])$ will be  Ramanujan (\cite{LPS}), or, 
  respectively, bi-Ramanujan (\cite{cristina-dan}). The latter case was treated in~\cite{wine-paper}, and this article is in some sense its conceptional continuation.
 \end{example}
 \begin{lem}\label{lemma-torsion-freeness}
  In the cyclic presentation of the division algebra let $E=\Q(\zeta_3)$, $a=\zeta_3^j$, ($j\not\equiv 0\mod 3$), and $b=1$.
  Then the special unitary group $\G(\Q)$ is the direct product of $<\zeta_3>$ with a torsion-free subgroup $\G_0$.
 \end{lem}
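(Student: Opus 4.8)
The plan is to prove the statement in two stages: first to identify the torsion subgroup of $\G(\Q)$ as exactly $\langle\zeta_3\rangle$, and then to exhibit this central copy of $C_3$ as a direct factor. The first stage is the elementary algebraic core; the second is where the real work lies.

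For the first stage, let $g\in\G(\Q)$ have finite order $n$. Since $g$ generates a finite cyclic subgroup of the division algebra $D^\times$ it is a root of unity, and $\Q(g)=\Q(\zeta_n)$ is a commutative subfield of $D$. By Corollary~\ref{no-elements-of-even-order} the group $\G(\Q)$ contains no non-trivial element of even order, so $n$ is odd. As $E=\Q(\zeta_3)$ is the centre of $D$ it commutes with $g$, whence $\Q(\zeta_m)$ with $m=\mathrm{lcm}(n,3)$ is a subfield of $D$ containing $E$; every such subfield has degree $1$ or $3$ over $E$, so $\phi(m)=[\Q(\zeta_m):\Q]\in\{2,6\}$. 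The solutions with $3\mid m$ are $m\in\{3,6,9,18\}$, and intersecting $n\mid m$ with the oddness of $n$ leaves $n\in\{1,3,9\}$. The case $n=9$ is ruled out by the reduced–norm condition (\ref{det}): a primitive ninth root of unity $g$ satisfies $g\notin E$ and $g^3\in\mu_3\subset E$, and since a primitive cube root of unity is not a cube in $E=\Q(\zeta_3)$ (there is no primitive ninth root of unity in $E$), the polynomial $X^3-g^3$ is its minimal polynomial over $E$; hence $\n_{rd}(g)=g^3\neq1$, contradicting $g\in\SU$. Thus $n\in\{1,3\}$, and as the only cube roots of unity in $D$ lie in $E$, every torsion element of $\G(\Q)$ belongs to $\langle\zeta_3\rangle$, which is central because $\zeta_3\in E=Z(D)$.

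For the second stage I set $\G_0=[\G(\Q),\G(\Q)]$. Any finite-order element of $\G_0$ is a torsion element of $\G(\Q)$, hence lies in $\langle\zeta_3\rangle\cap\G_0$; so $\G_0$ is torsion-free the moment we know $\zeta_3\notin\G_0$. Since $\langle\zeta_3\rangle$ is central it commutes with $\G_0$, and the whole statement $\G(\Q)=\langle\zeta_3\rangle\times\G_0$ reduces to the single assertion that $\G(\Q)^{\mathrm{ab}}\cong\langle\bar\zeta_3\rangle\cong C_3$: the condition $\zeta_3\notin\G_0$ says $\bar\zeta_3\neq1$, and $\langle\zeta_3\rangle\G_0=\G(\Q)$ says $\bar\zeta_3$ generates $\G(\Q)^{\mathrm{ab}}$; together with $\bar\zeta_3^{3}=1$ these force an isomorphism with $C_3$. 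Note that the quotient $\G(\Q)/\langle\zeta_3\rangle$ is automatically torsion-free by the first stage, so it is only the possible non-split central class in degree three that has to be excluded.

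This abelianisation computation is exactly the reduced unitary Whitehead group $SUK_1(D,\alpha)$, and it is the main obstacle: the finite and non-archimedean quasi-split point groups of the simply connected group $\SU_3$ are perfect, so no character detecting $\zeta_3$ can come from a split or unramified place. The route I would take is to fix a finite place $v$ of $F$ at which $D$ ramifies, so that $\G(F_v)=\SU(D_v,\alpha_v)$ is the anisotropic local unitary group of a local division algebra of degree three; I would compute the abelianisation of this local group, check that $\zeta_3$ survives in it, and pull the resulting character back along $\G(\Q)\hookrightarrow\G(F_v)$ to obtain the desired retraction onto $\langle\zeta_3\rangle$. Verifying simultaneously that $\zeta_3$ is not a product of commutators and that it generates the abelianisation — equivalently, invoking Wang/Yanchevskii-type triviality of $SUK_1$ in prime degree together with the determination of the centre of $\SU_3$ — is the hard part; everything else is the bookkeeping recorded above.
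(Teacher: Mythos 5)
Your first stage is correct and is essentially the paper's own argument: a finite-order element of $D^\times$ is a root of unity, the constraint $\phi(m)\in\{2,6\}$ together with $E\subseteq\Q(\zeta_m)$ and Corollary~\ref{no-elements-of-even-order} leaves only $n\in\{1,3,9\}$, and order nine is excluded by the reduced norm. (The paper rules out $\zeta_7$ by irreducibility of the seventh cyclotomic polynomial over $E$ and rules out order nine by conjugating an arbitrary primitive ninth root of unity to $z$; you instead read off $\n_{rd}(g)=g^3\neq 1$ from the minimal polynomial $X^3-g^3$. Both are valid, and your variant is marginally cleaner.) So the torsion of $\G(\Q)$ is exactly the central subgroup $\langle\zeta_3\rangle$.

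Your second stage, however, is not a proof: you reduce the splitting $\G(\Q)=\langle\zeta_3\rangle\times\G_0$ to the assertion that $\G(\Q)^{\mathrm{ab}}$ is cyclic of order three generated by the class of $\zeta_3$, identify this with a reduced unitary Whitehead group computation, and then explicitly leave that computation open. As it stands, the proposal establishes only that the torsion subgroup is central of order three, and this does not by itself produce a direct factor: a central extension of a torsion-free group by $C_3$ need not split (the integral Heisenberg group with its centre reduced modulo $3$ has central torsion exactly $C_3$ and admits no such decomposition, since a splitting would force it to be abelian). You should know, though, that the paper's own proof stops at exactly the same point --- it determines the torsion and says nothing about the splitting --- so your stage two correctly isolates what is missing from the lemma as stated rather than overlooking something the paper supplies. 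For the use made of the lemma in Example~\ref{example-S-arithmetic} (freeness of the action on the Bruhat--Tits building after factoring out the centre, which acts trivially) only the torsion computation is needed, so the practical content of your proposal matches the paper's; but neither your write-up nor the paper's proves the direct product claim in the form stated.
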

\begin{proof}
We first detect the possible orders of torsion points in $D$.
Assume $n\not=2,3,6$, and let $\xi$ be a primitive $n$-th root of unity in $D$. Then $\Q(\xi)/\Q$ is the $n$-th cyclotomic extension, which has degree $\phi(n)$. 
As $D$ is of degree three over $E$, we must have $\phi(n)=6$. It follows that the prime divisors of $n$ are contained in $\{2,3,7\}$. But the minimal polynomial of $\zeta_7$ is
$X^6+X^5+X^4+X^3+X^2+X+1$, which remains irreducible over $E$. So no seventh root of unity can be contained in $D$, but only the  eighteenth.

As any sixth root of unity in $D$ belongs to $E$, a torsion element of $\G(\Q)$ which is non-central must have order nine (eighteen being excluded by Corollary~\ref{no-elements-of-even-order}).
By assumption, we have chosen $z$ to be a primitive ninth root of unity in $D$, and $\n_{rd}(z)=a=\zeta_3^j\not=1$, so $z\not\in \G(F)$.
Let $y\in D$ be another primitive ninth root of unity. Then $z\mapsto y$ defines an automorphism on $D$, which is inner, $y=gzg^{-1}$ for some $g\in D^\times$. But then $\n_{rd}(y)=\n_{rd}(z)\not=1$, 
and so $y\not\in\G(F)$ as well.
\end{proof}


\enlargethispage{2cm}

\end{document}